%% LyX 2.2.2 created this file.  For more info, see http://www.lyx.org/.
%% Do not edit unless you really know what you are doing.
\documentclass[english]{article}
\usepackage[T1]{fontenc}
\usepackage[latin9]{inputenc}
\setlength{\parskip}{\smallskipamount}
\setlength{\parindent}{0pt}
\usepackage{refstyle}
\usepackage{enumitem}
\usepackage{amsmath}
\usepackage{amsthm}
\usepackage{amssymb}
\usepackage{setspace}
\onehalfspacing

\makeatletter

%%%%%%%%%%%%%%%%%%%%%%%%%%%%%% LyX specific LaTeX commands.

\AtBeginDocument{\providecommand\secref[1]{\ref{sec:#1}}}
\AtBeginDocument{\providecommand\propref[1]{\ref{prop:#1}}}
\AtBeginDocument{\providecommand\enuref[1]{\ref{enu:#1}}}
\AtBeginDocument{\providecommand\remref[1]{\ref{rem:#1}}}
\AtBeginDocument{\providecommand\lemref[1]{\ref{lem:#1}}}
\AtBeginDocument{\providecommand\thmref[1]{\ref{thm:#1}}}
\AtBeginDocument{\providecommand\exaref[1]{\ref{exa:#1}}}
\AtBeginDocument{\providecommand\corref[1]{\ref{cor:#1}}}
\RS@ifundefined{subsecref}
  {\newref{subsec}{name = \RSsectxt}}
  {}
\RS@ifundefined{thmref}
  {\def\RSthmtxt{theorem~}\newref{thm}{name = \RSthmtxt}}
  {}
\RS@ifundefined{lemref}
  {\def\RSlemtxt{lemma~}\newref{lem}{name = \RSlemtxt}}
  {}

%%%%%%%%%%%%%%%%%%%%%%%%%%%%%% Textclass specific LaTeX commands.
      % auxiliary length 
\theoremstyle{plain}
\newtheorem{thm}{\protect\theoremname}[section]
  \theoremstyle{definition}
  \newtheorem{defn}[thm]{\protect\definitionname}
  \theoremstyle{plain}
  \newtheorem{prop}[thm]{\protect\propositionname}
  \theoremstyle{plain}
  \newtheorem{lem}[thm]{\protect\lemmaname}
  \theoremstyle{definition}
  \newtheorem{example}[thm]{\protect\examplename}
  \theoremstyle{plain}
  \newtheorem{cor}[thm]{\protect\corollaryname}
  \theoremstyle{remark}
  \newtheorem{rem}[thm]{\protect\remarkname}

\@ifundefined{date}{}{\date{}}
%%%%%%%%%%%%%%%%%%%%%%%%%%%%%% User specified LaTeX commands.
%\AtBeginDocument{\providecommand\condref[1]{\ref{cond:#1}}}
%\AtBeginDocument{\providecommand\defref[1]{\ref{def:#1}}}
\AtBeginDocument{\providecommand\remref[1]{\ref{rem:#1}}}
%\AtBeginDocument{\providecommand\caseref[1]{\ref{case:#1}}}
\AtBeginDocument{\providecommand\propref[1]{\ref{prop:#1}}}
\RS@ifundefined{propref}{\newref{prop}{name =Proposition~,names = Propositions~}}{}
\AtBeginDocument{\providecommand\corref[1]{\ref{cor:#1}}}
\RS@ifundefined{corref}{\newref{cor}{name =Corollary~,names = Corollaries~}}{}
%\RS@ifundefined{defref}{\newref{def}{name =Definition~,names = Definitions~}}{}
\RS@ifundefined{exaref}{\newref{exa}{name =Example~,names = Examples~}}{}
\AtBeginDocument{\renewcommand\secref[1]{Section~\ref{sec:#1}}}
\AtBeginDocument{}
\AtBeginDocument{\renewcommand\secref[1]{Section~\ref{sec:#1}}}
\AtBeginDocument{\renewcommand\remref[1]{Remark~\ref{rem:#1}}}

\def\@fnsymbol#1{\ensuremath{\ifcase#1\or *\or **\or \ddagger\or
   \mathsection\or \mathparagraph\or \|\or \dagger\dagger
   \or \ddagger\ddagger \else\@ctrerr\fi}}

\usepackage{enumitem, hyperref}
\makeatletter
\def\namedlabel#1#2{\begingroup
    #2%
    \def\@currentlabel{#2}%
    \phantomsection\label{#1}\endgroup
}

\usepackage{tikz}

\usepackage{tocbibind}
\usepackage{youngtab}
\usepackage{authblk}
\usetikzlibrary{matrix}
\usepackage{bbding}
\usepackage{ytableau}
\usepackage{amsmath}
\usepackage{geometry}
\usepackage{mathtools}

\tikzset{   pt/.style={insert path={node[scale=2]{.}}},   dnup/.style={insert path={ [pt] .. controls +(0,1) and +(0,-1) .. +(#1,2) [pt]}},   dndn/.style={insert path={ [pt] .. controls +(0,0.25) and +(0,0.25) .. +(#1,0) [pt]}},   upup/.style={insert path={ [pt] .. controls +(0,-0.25) and +(0,-0.25) .. +(#1,0) [pt]}}, upup2/.style={insert path={ [pt] .. controls +(0,-0.5) and +(0,-0.5) .. +(#1,0) [pt]}}, }

\newcommand{\Rt}{\widetilde{\mathcal{R}}_E}
\newcommand{\Lt}{\widetilde{\mathcal{L}}_E}

\DeclareMathOperator{\id}{id}

\DeclareMathOperator{\im}{\mathsf{im}}

\DeclareMathOperator{\CT}{C}

\DeclareMathOperator{\Rc}{\mathcal{R}}
\DeclareMathOperator{\Lc}{\mathcal{L}}

\DeclareMathOperator{\Jc}{\mathcal{J}}

\newcommand{\PCS}{\text{PCS}}
\newcommand{\MCS}{\text{MCS}}

\def\RSlemtxt{Lemma~}
\def\RSthmtxt{Theorem~}

\usepackage{babel}
\providecommand{\corollaryname}{Corollary}
  \providecommand{\definitionname}{Definition}
  \providecommand{\examplename}{Example}
  \providecommand{\lemmaname}{Lemma}
  \providecommand{\propositionname}{Proposition}
  \providecommand{\remarkname}{Remark}
 
\providecommand{\theoremname}{Theorem}

\makeatother

\usepackage{babel}
  \providecommand{\corollaryname}{Corollary}
  \providecommand{\definitionname}{Definition}
  \providecommand{\examplename}{Example}
  \providecommand{\lemmaname}{Lemma}
  \providecommand{\propositionname}{Proposition}
  \providecommand{\remarkname}{Remark}
\providecommand{\theoremname}{Theorem}

\begin{document}

\title{Algebras of Reduced $E$-Fountain Semigroups and the Generalized
Ample Identity}

\author{Itamar Stein\thanks{Mathematics Unit, Shamoon College of Engineering, 77245 Ashdod, Israel }\\
\Envelope \, Steinita@gmail.com}
\maketitle
\begin{abstract}
Let $S$ be a reduced $E$-Fountain semigroup. If $S$ satisfies the
congruence condition, there is a natural construction of a category
$\mathcal{C}$ associated with $S$. We define a $\Bbbk$-module homomorphism
$\varphi:\Bbbk S\to\Bbbk\mathcal{C}$ (where $\Bbbk$ is any unital
commutative ring). With some assumptions, we prove that $\varphi$
is an isomorphism of $\Bbbk$-algebras if and only if some weak form
of the right ample identity holds in $S$. This gives a unified generalization
for a result of the author on right restriction $E$-Ehresmann semigroups
and a result of Margolis and Steinberg on the Catalan monoid.
\end{abstract}
\textbf{2020 Mathematics Subject Classification. 20M25, 16G10.}

\textbf{Keywords:} Reduced $E$-Fountain semigroups, Semigroup algebras,
Ample identity, Catalan monoid, Ehresmann semigroups.

\section{Introduction }

Let $S$ be a semigroup and let $\Bbbk$ be a commutative unital ring.
It is of interest to study the semigroup algebra $\Bbbk S$ and its
representations. Quite often it is complicated to investigate the
algebra $\Bbbk S$ directly, but a change of basis yields an isomorphic
algebra whose structure is more transparent. A pioneering work in
this direction was done by Solomon \cite{Solomon1967} who proved
that the semigroup algebra of a finite semilattice is isomorphic to
a product of copies of the base ring. This was generalized by Steinberg
\cite{Steinberg2006} who showed that the algebra of any finite inverse
semigroup is isomorphic to the algebra of its associated inductive
groupoid. Steinberg's result is now fundamental in the study of representations
of finite inverse semigroups. Guo and Chen \cite{Guo2012} obtained
a similar result for finite ample semigroups and the author extended
this generalization to a class of right restriction $E$-Ehresmann
semigroups \cite{Stein2017,Stein2018erratum} ($E$-Ehresmann semigroups
were introduced by Lawson in \cite{Lawson1991}). This result has
led to several applications regarding semigroups of partial functions
\cite{Stein2016,Stein2019,Stein2020,Margolis2021} and recently also
to the study of certain partition monoids \cite{East2021}. We mention
also that Wang \cite{Wang2017} generalized the above results further
to a certain class of right $P$-restriction, $P$-Ehresmann semigroups
(for definitions of these notions see \cite{Jones2012}) - but we
do not follow this approach in this paper. A hint for another direction
is given by the Catalan monoid. The Catalan monoid $\CT_{n}$ contains
the order-preserving $(x\leq y\implies f(x)\leq f(y)$) and order-increasing
($x\leq f(x)$) functions $f$ on $\{1,\ldots,n\}$. It is known that
the algebra of the Catalan monoid is isomorphic to a certain incidence
algebra (\cite[Theorem 5.5]{Hivert2009} and \cite[Theorem 17.25]{Steinberg2016})
but recently Margolis and Steinberg obtained a simple proof of this
fact using the change of basis approach \cite{Margolis2018A}. Their
result is not implied by any of the above-mentioned results. The goal
of this paper is to obtain a generalization for the theorem on right
restriction $E$-Ehresmann semigroups that includes also the case
of the Catalan monoid. The class of semigroups which provides the
correct context for this task is the class of reduced $E$-Fountain
semigroups - also introduced by Lawson \cite{Lawson1990} under the
name reduced $E$-semiabundant semigroups (which also appears in the
literature as DR-semigroups \cite{Stokes2015}). Given a subset of
idempotents $E$ of $S$ we can define two equivalence relations $\Lt$
and $\Rt$ on $S$. We say that $a\Lt b$ ($a\Rt b$) if $a$ and
$b$ have the same set of right (respectively, left) identities from
$E$. The semigroup $S$ is called reduced $E$-Fountain if every
$\Lt$ and $\Rt$-class contains a (unique) idempotent from $E$ and
$ef=e\iff fe=e$ for every $e,f\in E$. If in addition $\Lt$ and
$\Rt$ are right and left congruences respectively then we can associate
a certain category $\mathcal{C}={\bf C}(S)$ with the semigroup $S$
(for full details see \cite{Lawson1991}). We remark that $E$-Ehresmann
semigroups are precisely those reduced $E$-Fountain semigroups which
satisfy the congruence condition and whose distinguished subset of
idempotents $E$ forms a subsemilattice (i.e., a commutative subsemigroup
of idempotents). For $a,b\in S$ we define a relation $\trianglelefteq_{l}$
by the rule that $a\trianglelefteq_{l}b$ if $a=be$ for an idempotent
$e\in E$. This is a generalization of the right restriction partial
order defined on $E$-Ehresmann semigroups - but $\trianglelefteq_{l}$
is not a partial order or even a preorder. We also define an identity
we call the generalized right ample condition which is a weak form
of the right ample condition studied in the theory of $E$-Fountain
semigroups. This background is described in \secref{Preliminaries}
and \secref{Theory_of_idempotents} of the paper. Let $S$ be a reduced
$E$-Fountain semigroup which satisfies the congruence condition and
assume also that sets of the form $\{b\in S\mid b\trianglelefteq_{l}a\}$
are finite for every $a\in S$. Let $\Bbbk$ be a commutative unital
ring. In \secref{Algebras_Section} we define a ``change of basis''
$\Bbbk$-module homomorphism $\varphi$ between the semigroup algebra
$\Bbbk S$ and $\Bbbk\mathcal{C}$ - the algebra of the associated
category $\mathcal{C}$. We show that $\varphi$ is a homomorphism
of $\Bbbk$-algebras if and only if the generalized right ample identity
holds. We also obtain an isomorphism in case $\trianglelefteq_{l}$
is contained in a partial order. In \secref{Ehresmann_semigroups}
we consider the case where $E$ is a subband (i.e., a subsemigroup
of idempotents). In this case, we obtain an $E$-Ehresmann semigroup
and $\trianglelefteq_{l}$ is a partial order. Moreover, the generalized
right ample condition and the standard one coincide. In \secref{Catalan_monoid}
we discuss in detail the case of the Catalan monoid and show that
we retrieve the isomorphism described in \cite{Margolis2018A}.

\textbf{Acknowledgments:} The author thanks Professor Victoria Gould
for a helpful conversation and the referee for helpful comments.

\section{\label{sec:Preliminaries}Preliminaries}

Let $S$ be a semigroup and let $S^{1}=S\cup\{1\}$ be the monoid
formed by adjoining a formal unit element. Recall that Green's preorders
$\leq_{\Rc}$, $\leq_{\Lc}$ and $\leq_{\Jc}$ are defined by:
\begin{align*}
a\leq_{\Rc}b\iff & aS^{1}\subseteq bS^{1}\\
a\leq_{\Lc}b\iff & S^{1}a\subseteq S^{1}b\\
a\leq_{\Jc}b\iff & S^{1}aS^{1}\subseteq S^{1}bS^{1}
\end{align*}
The associated Green's equivalence relations on $S$ are denoted by
$\Rc$, $\Lc$ and $\Jc$. It is well known that $\Lc$ ($\Rc$) is
a right congruence (respectively, left congruence). A semigroup $S$
is called $\Jc$- trivial if $\Jc$ is the identity relation (that
is, $a\Jc b\iff a=b$). Similar definitions hold for $\Rc$-trivial
and $\Lc$-trivial semigroups. We denote by $E(S)$ the set of idempotents
of $S$. We denote by $\leq$ the natural partial order on $E(S)$
defined by 
\[
e\leq f\iff(ef=fe=e).
\]

Other elementary semigroup theoretic notions can be found in \cite{Howie1995}.
Let $E\subseteq E(S)$ be some subset of idempotents. For every $a\in S$
we denote by $a_{E}$ the set of right identities of $a$ from $E$:
\[
a_{E}=\{e\in E\mid ae=a\}
\]
Dually $\prescript{}{E}{a}$ is the set of left identities from $E$.
We define two equivalence relations $\Lt$ and $\Rt$ on $S$ by 
\begin{align*}
a\Lt b & \iff(a_{E}=b_{E})\\
a\Rt b & \iff(\ensuremath{\prescript{}{E}{a}}=\ensuremath{\prescript{}{E}{b}}).
\end{align*}

It is easy to see that $\Lc\subseteq\Lt$ and $\Rc\subseteq\Rt$.
Recall that a subsemigroup of idempotents $E\subseteq S$ is called
a \emph{subband.} A commutative subband is called a \emph{subsemilattice}.

Let $\Bbbk$ be a commutative unital ring. The \emph{semigroup algebra}
$\mathbb{\Bbbk}S$ of a semigroup $S$ is defined in the following
way. It is a free $\mathbb{\Bbbk}$-module with basis the elements
of $S$, that is, it consists of all formal linear combinations
\[
\{k_{1}s_{1}+\ldots+k_{n}s_{n}\mid k_{i}\in\mathbb{\Bbbk},\,s_{i}\in S\}.
\]
The multiplication in $\mathbb{\Bbbk}S$ is the linear extension of
the semigroup multiplication. We will also need the notion of a category
algebra in this paper. The \emph{category algebra} $\mathbb{\Bbbk}\mathcal{C}$
of a (small) category $\mathcal{C}$ is defined in the following way.
It is a free $\mathbb{\Bbbk}$-module with the morphisms of $\mathcal{C}$
as a basis, that is, it consists of all formal linear combinations
\[
\{k_{1}m_{1}+\ldots+k_{n}m_{n}\mid k_{i}\in\mathbb{\Bbbk},\,m_{i}\in\mathcal{C}^{1}\}.
\]
The multiplication in $\mathbb{\Bbbk}\mathcal{C}$ is the linear extension
of the following:
\[
m^{\prime}\cdot m=\begin{cases}
m^{\prime}m & \text{if \ensuremath{m^{\prime}m} is defined}\\
0 & \text{otherwise}.
\end{cases}
\]
Let $R$ be a relation on a set $X$. We say that $R$ is principally
finite if the set $\{x^{\prime}\in X\mid x^{\prime}Rx\}$ is finite
for every $x\in X$. If $R=\preceq$ is a principally finite partial
order, we can define the \emph{incidence algebra of $\preceq$ }which\emph{
}consists of all functions $f:\preceq\to\Bbbk$ with standard addition
operation and whose multiplication operation is defined by 
\[
f\star g(a,b)=\sum_{a\preceq c\preceq b}f(a,c)g(c,b).
\]
We denote this algebra by $\Bbbk[\preceq]$. It is well known that
an element $f\in\Bbbk[\preceq]$ is invertible if and only if $f(x,x)$
is invertible in $\Bbbk$ for every $x\in X$. For other elementary
facts on incidence algebras see \cite[Section 3.6]{Stanley1997}).
Another point is worth mentioning. The poset $\preceq$ can also be
viewed as a category whose set of objects is $X$ and there exists
a unique morphism from an object $x_{1}$ to another object $x_{2}$
if and only if $x_{1}\preceq x_{2}$. The category algebra in general
differs from the incidence algebra because the elements of the category
algebra are only \emph{finite} linear combinations. However, the two
notions coincide if $X$ is a finite set.

\section{\label{sec:Theory_of_idempotents}Reduced $E$-Fountain Semigroups}

\subsection{Basic definitions}
\begin{defn}
A semigroup $S$ is called \emph{$E$-Fountain} if every $\Lt$-class
contains an idempotent from $E$ and every $\Rt$-class contains an
idempotent from $E$, where $E\subseteq S$ is some subset of idempotents.
We remark that this property is also called ``$E$-semiabundant''
in the literature.
\end{defn}
The following is an immediate consequence of \cite[Propositions 1.2 and 1.3]{Stokes2015}.
\begin{prop}
\label{prop:Reduced_E_Fountain}Let $S$ be an $E$-Fountain semigroup.
The following conditions are equivalent:
\begin{enumerate}
\item $\forall e,f\in E$,$\quad$ $ef=e\iff fe=e$.
\item For every $a\in S$ the sets $a_{E}$ and $\prescript{}{E}{a}$ contain
a minimum element (with respect to the natural partial order on idempotents).
\item \label{enu:Variety_of_reduced_E_fountain}We can equip $S$ with two
unary operations $\ast$ and $+$ which satisfy the following identities
for every $a,b\in S$:
\begin{align*}
a^{+}a & =a\quad(a^{+})^{+}=a^{+}\quad a{}^{+}(ab)^{+}=(ab)^{+}a^{+}=(ab)^{+}\\
aa^{\ast} & =a\quad(a^{\ast})^{\ast}=a^{\ast}\quad b{}^{\ast}(ab)^{\ast}=(ab)^{\ast}b^{\ast}=(ab)^{\ast}\\
(a^{+})^{\ast} & =a^{+}\quad(a^{\ast})^{+}=a^{\ast}
\end{align*}
\end{enumerate}
\end{prop}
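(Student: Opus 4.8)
The plan is to prove Proposition~\ref{prop:Reduced_E_Fountain} by establishing the cycle of implications $(1)\Rightarrow(2)\Rightarrow(3)\Rightarrow(1)$, citing \cite[Propositions 1.2 and 1.3]{Stokes2015} where the equivalence of $(1)$ and $(2)$ is essentially already available, and then doing the bookkeeping to connect the unary-operation formulation in $(3)$.

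First, for $(1)\Rightarrow(2)$, I would fix $a\in S$ and recall that, since $S$ is $E$-Fountain, the $\Lt$-class of $a$ contains an idempotent $e\in E$; by definition of $\Lt$ this means $a_E=e_E$, and in particular $e\in a_E$ (because $ee=e$), so $a_E$ is nonempty. The claim is that $e$ is the minimum of $a_E$: if $f\in a_E$ then $af=a$, hence $ef\in e_E$... more directly, one shows $ef=e$. The standard argument is that $f\in a_E = e_E$ means $ef=e$, and then hypothesis $(1)$ gives $fe=e$, so $e\le f$ in the natural partial order. Thus $e=\min a_E$, and dually $\prescript{}{E}{a}$ has a minimum. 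For $(2)\Rightarrow(1)$, given $e,f\in E$ with $ef=e$, I would apply $(2)$ to $a=e$: then $f\in e_E$ and the minimum $e_0$ of $e_E$ satisfies $e_0\le e$ and $e_0\le f$; since $e\in e_E$ forces $e_0\le e$, but also $e=\min$? — here one has to argue $e$ itself is already the minimum, giving $e\le f$, i.e. $ef=fe=e$, so $fe=e$. (This is precisely the content extracted from \cite{Stokes2015}, so I would lean on that citation rather than re-deriving every step.)

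The substantive part is $(2)\Leftrightarrow(3)$. Assuming $(2)$, I would \emph{define} $a^+$ to be the minimum of $\prescript{}{E}{a}$ and $a^*$ to be the minimum of $a_E$, and then verify each identity. The identities $a^+a=a$ and $aa^*=a$ are immediate since $a^+\in\prescript{}{E}{a}$ and $a^*\in a_E$. For $(a^+)^+=a^+$ and $(a^*)^*=a^*$ one checks that for an idempotent $e\in E$ the minimum of $\prescript{}{E}{e}$ is $e$ itself (using $(1)$, which we now have): $e\in\prescript{}{E}{e}$, and any $f\in\prescript{}{E}{e}$ has $fe=e$ hence $ef=e$ by $(1)$, so $e\le f$. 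Similarly $(a^+)^*=a^+$ and $(a^*)^+=a^*$ follow because $a^+,a^*\in E$ are idempotents, for which $\min$ of their left/right identity sets is themselves. The mixed identities $a^+(ab)^+=(ab)^+a^+=(ab)^+$ and $b^*(ab)^*=(ab)^*b^*=(ab)^*$ are the ones requiring real work: one must show $(ab)^+\le a^+$ in the natural order, equivalently $(ab)^+a^+=a^+(ab)^+=(ab)^+$. I would argue $(ab)^+\in\prescript{}{E}{ab}$ means $(ab)^+ab=ab$; I want to show $a^+\in\prescript{}{E}{(ab)}$ too — indeed $a^+(ab)=(a^+a)b=ab$, so $a^+\in\prescript{}{E}{(ab)}$, hence $(ab)^+\le a^+$ by minimality, which by the natural-partial-order definition unpacks to exactly the required identities. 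The dual argument handles the $*$-identities with $b^*(ab)=? $ — here one needs $ab\,b^*=a(bb^*)=ab$, so $b^*\in(ab)_E$, giving $(ab)^*\le b^*$.

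Conversely, for $(3)\Rightarrow(2)$, given the unary operations I would show $a^+=\min\prescript{}{E}{a}$: membership is $a^+a=a$ together with $a^+\in E$ (which follows from $(a^+)^+=a^+$ and... one needs to know $a^+$ is an idempotent in $E$ — this should be arranged by the axioms, e.g. $a^+$ is fixed by $+$ and $*$), and minimality is the delicate point: if $f\in\prescript{}{E}{a}$, i.e. $fa=a$, I want $a^+f=fa^+=a^+$; one writes $a^+=(fa)^+$? — no, rather uses that $a=fa$ implies $a^+=(fa)^+$ and then invokes the mixed identity $f^+(fa)^+=(fa)^+$ with $f^+=f$ (since $f$ idempotent) to get $f a^+ = a^+$, and symmetrically. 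The main obstacle I anticipate is precisely this last minimality verification and, relatedly, confirming from the axioms in $(3)$ that $a^+$ and $a^*$ genuinely lie in $E$ (not merely in $E(S)$) — i.e., reconciling the abstract unary-operation signature with the concrete distinguished subset $E$; this is where I would most carefully consult \cite{Stokes2015}, \cite{Lawson1991}, and \cite{Stokes2015}'s DR-semigroup framework rather than improvise.
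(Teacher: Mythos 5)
Your proposal does genuinely more than the paper, which offers no proof of this proposition at all: it is stated as an immediate consequence of \cite[Propositions 1.2 and 1.3]{Stokes2015}. So there is no argument of the paper's to compare step by step; what follows assesses your derivation on its own terms. The parts you work out are correct. For $(1)\Rightarrow(2)$, taking the idempotent $e\in E$ in the $\Lt$-class of $a$ gives $a_E=e_E\ni e$, and for $f\in e_E$ condition $(1)$ upgrades $ef=e$ to $ef=fe=e$, so $e=\min a_E$. For $(2)\Rightarrow(1)$, the step you hesitate over closes easily: if $e_0=\min e_E$, then $e_0\le e$ gives $ee_0=e_0$, while $e_0\in e_E$ gives $ee_0=e$, so $e_0=e$ and hence $e\le f$ for every $f\in e_E$, in particular $fe=e$. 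Your verification of $(2)\Rightarrow(3)$ with $a^{+}:=\min\prescript{}{E}{a}$ and $a^{\ast}:=\min a_E$ is also fine; note that $(a^{+})^{+}=a^{+}$ and $(a^{+})^{\ast}=a^{+}$ follow from minimality alone by the same two-line computation, so you do not actually need to route through $(1)$ there.

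The one genuine gap is the one you flag yourself but leave unresolved: $(3)\Rightarrow(1)$ is false if condition $(3)$ is read literally, with no compatibility required between the unary operations and the distinguished subset $E$. In any monoid the operations $a^{+}=a^{\ast}=1$ satisfy every identity in the list, while $(1)$ can certainly fail for a badly chosen $E\subseteq E(S)$. The implication needs the additional (intended, and supplied by the DR-semigroup axioms of \cite{Stokes2015}) requirement that $E$ be the common image of $+$ and $\ast$, equivalently that $e^{+}=e^{\ast}=e$ for all $e\in E$. Once that is in place your minimality argument works verbatim: for $f\in\prescript{}{E}{a}$ one has $(fa)^{+}=a^{+}$, and the identity $f^{+}(fa)^{+}=(fa)^{+}f^{+}=(fa)^{+}$ with $f^{+}=f$ yields $fa^{+}=a^{+}f=a^{+}$, i.e.\ $a^{+}\le f$; alternatively $(3)\Rightarrow(1)$ follows directly from $f^{\ast}(ef)^{\ast}=(ef)^{\ast}$ together with $e^{\ast}=e$ and $f^{\ast}=f$. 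So the proposal is essentially sound, but to be a complete proof it must make this $E$-compatibility in condition $(3)$ explicit rather than deferring it to the references.
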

It is important to note that $a^{\ast}$ ($a^{+})$ is the minimum
element of $a_{E}$ (respectively, $\prescript{}{E}{a}$) and that
$e^{\ast}=e^{+}=e$ for every $e\in E$.

We follow \cite{Lawson1990} and call an $E$-Fountain semigroup \emph{reduced}
if it satisfies the equivalent conditions of \propref{Reduced_E_Fountain}.
Such a semigroup is called a ``DR-semigroup'' in \cite{Stokes2015}.
It is clear from Condition \enuref{Variety_of_reduced_E_fountain}
that the class of all reduced $E$ -Fountain semigroup is a variety
of bi-unary semigroups.
\begin{defn}
Let $S$ be a reduced $E$-Fountain semigroup. We say that $S$ satisfies
the \emph{congruence condition} if $\Lt$ is a right congruence and
$\Rt$ is a left congruence. 
\end{defn}
It is well known that $S$ satisfies the congruence condition if and
only if the identities $(ab)^{\ast}=(a^{\ast}b)^{\ast}$ and $(ab)^{+}=(ab^{+})^{+}$
hold - see \cite[Lemma 4.1]{Gould2010}. In this case we can define
a category $\mathcal{C}(S)$ in the following way. The objects are
in one-to-one correspondence with the set $E$. The morphisms are
in one to one correspondence with elements of $S$. The convention
is to compose morphisms in such categories ``from left to right''.
However, for us it will be more convenient to use composition ``from
right to left''. Therefore, for every $a\in S$ the associated morphism
$C(a)$ has domain $a^{\ast}$ and range $a^{+}$. If the range of
$C(a)$ is the domain of $C(b)$ (that is, if $b^{\ast}=a^{+}$) the
composition $C(b)\cdot C(a)$ is defined to be $C(ba)$. The assumption
$b^{\ast}=a^{+}$ implies that $(ba)^{+}=(ba^{+})^{+}=(bb^{\ast})^{+}=b^{+}$
and likewise $(ba)^{\ast}=a^{\ast}$ so this is indeed a category
- see \cite{Lawson1991} for additional details. 

\subsection{The relations $\leq_{l}$ and $\trianglelefteq_{l}$}

Let $S$ be a reduced $E$-Fountain semigroup. The following partial
order is defined in \cite{Lawson1991}:
\[
a\leq_{l}b\iff(a^{\ast}\leq b^{\ast}\text{ and }a=ba^{\ast})
\]
This is a generalization of the right ``restriction'' partial order
of an $E$-Ehresmann semigroup and in particular of the natural partial
order of an inverse semigroup. However, we will need a different generalization
for our purpose.
\begin{defn}
Let $a,b\in S$. We define $a\trianglelefteq_{l}b\iff a=be$ for some
$e\in E$.
\end{defn}
\begin{lem}
$a\trianglelefteq_{l}b\iff a=ba^{\ast}$.
\end{lem}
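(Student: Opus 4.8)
The plan is to prove the equivalence $a\trianglelefteq_{l}b \iff a=ba^{\ast}$ by establishing the two implications separately, using the characterization of $a^{\ast}$ as the minimum element of $a_E$ (noted after \propref{Reduced_E_Fountain}) together with the identities in Condition \enuref{Variety_of_reduced_E_fountain}.

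For the reverse implication, if $a=ba^{\ast}$ then since $a^{\ast}\in E$ we may simply take $e=a^{\ast}$ in the definition of $\trianglelefteq_{l}$, so $a\trianglelefteq_{l}b$ is immediate. The substance of the lemma is the forward implication.

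For the forward implication, suppose $a=be$ for some $e\in E$. The plan is to show $ae=a$, so that $e\in a_E$, and then to use minimality of $a^{\ast}$ to conclude. First, $ae=(be)e=b(ee)=be=a$ since $e$ is idempotent, giving $e\in a_E$. Hence $a^{\ast}\leq e$ by minimality of $a^{\ast}$ in $a_E$, which by definition of the natural partial order means $a^{\ast}e=ea^{\ast}=a^{\ast}$. Now compute $ba^{\ast}=b(ea^{\ast})$ — wait, I need $a^{\ast}$ on the correct side; since $a^{\ast}=ea^{\ast}$ we get $ba^{\ast}=b(ea^{\ast})=(be)a^{\ast}=aa^{\ast}=a$, where the last equality is the identity $aa^{\ast}=a$. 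This completes the forward direction.

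The main obstacle, such as it is, is purely bookkeeping: making sure the idempotent $e$ is multiplied on the side consistent with $a_E$ being a set of \emph{right} identities, and correctly invoking that $a^{\ast}=\min a_E$ yields $a^{\ast}=ea^{\ast}$ (the relation $a^{\ast}\le e$ unpacks to $a^{\ast}e=ea^{\ast}=a^{\ast}$, and it is the factor $a^{\ast}=ea^{\ast}$ that is needed to rewrite $ba^{\ast}$). No deep machinery is required beyond \propref{Reduced_E_Fountain} and the remark identifying $a^{\ast}$ with the minimum of $a_E$.
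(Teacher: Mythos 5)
Your proof is correct and follows essentially the same route as the paper's: both directions are handled identically, with the forward direction resting on $ae=a$, minimality giving $a^{\ast}=ea^{\ast}$, and the associativity computation $ba^{\ast}=b(ea^{\ast})=(be)a^{\ast}=aa^{\ast}=a$ (the paper writes the same chain in the reverse order). No issues.
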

\begin{proof}
The $\Leftarrow$ direction is immediate. For the other direction
assume $a=be$ so $ae=a$ hence $a^{\ast}\leq e$ in $E$. Therefore
\[
a=aa^{\ast}=a(ea^{\ast})=(ae)a^{\ast}=ba^{\ast}
\]
as required.
\end{proof}
The following uniqueness property will also be useful.
\begin{lem}
\label{lem:Uniqueness_of_restriction} If $a\trianglelefteq_{l}b$
then $a$ is the unique $x\in S$ such that $x\trianglelefteq_{l}b$
and $x^{\ast}=a^{\ast}$.
\end{lem}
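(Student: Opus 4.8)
The plan is to derive this immediately from the preceding characterization $a\trianglelefteq_{l}b\iff a=ba^{\ast}$. The point is that once we fix the base element $b$ and the idempotent $a^{\ast}$, the element $ba^{\ast}$ is completely determined, so any $x$ built from $b$ with the same $\ast$-image must coincide with $a$.

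Concretely, suppose $a\trianglelefteq_{l}b$. By the previous lemma this means $a=ba^{\ast}$. Now let $x\in S$ be any element with $x\trianglelefteq_{l}b$ and $x^{\ast}=a^{\ast}$. Applying the previous lemma again (this time to $x$), the hypothesis $x\trianglelefteq_{l}b$ gives $x=bx^{\ast}$. Substituting $x^{\ast}=a^{\ast}$ yields $x=bx^{\ast}=ba^{\ast}=a$, which is exactly the claimed uniqueness.

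There is no real obstacle here: the whole statement is a one-line consequence of the characterization lemma, and the only thing to be careful about is invoking that lemma for $x$ as well as for $a$, and keeping the roles of the two $\ast$-images straight before the substitution $x^{\ast}=a^{\ast}$ is made. (Note that the lemma is being used only in the $\Rightarrow$ direction, which was the nontrivial one, so no circularity arises.)
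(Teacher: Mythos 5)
Your proof is correct and is essentially identical to the paper's: both reduce the claim to the characterization $x\trianglelefteq_{l}b\iff x=bx^{\ast}$ and conclude $x=bx^{\ast}=ba^{\ast}=a$. Nothing further is needed.
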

\begin{proof}
Assume $x\trianglelefteq_{l}b$ such that $x^{\ast}=a^{\ast}$. Then
$x=bx^{\ast}=ba^{\ast}=a$.
\end{proof}
It is clear that $\leq_{l}\subseteq\trianglelefteq_{l}$ and it is
easy to see that $e\leq f\iff e\leq_{l}f\iff e\trianglelefteq_{l}f$
for $e,f\in E$. It is tempting to think that $\trianglelefteq_{l}$
is also a partial order. It is clear that $\trianglelefteq_{l}$ is
reflexive since $a=aa^{\ast}$. However, the following example shows
that in general it is not antisymmetric and in \remref{Leq_l_not_transitive}
we will give a counter-example for transitivity.
\begin{example}
\cite[Example 2.3]{Wang2017}\label{exa:Square_rectangular_band}
Let $S$ be a ``square'' rectangular band. The elements of $S$
are pairs $(i,j)$ where $1\leq i,j\leq n$ for some fixed $n\in\mathbb{N}$.
Multiplication is defined by 
\[
(i_{1},j_{1})\cdot(i_{2},j_{2})=(i_{1},j_{2}).
\]
Recall that $(i_{1},j_{1})\Rc(i_{2},j_{2})\iff i_{1}=i_{2}$ and $(i_{1},j_{1})\Lc(i_{2},j_{2})\iff j_{1}=j_{2}$.
Every element of $S$ is an idempotent, but we choose $E$ to be the
set of ``diagonal'' elements $E=\{(i,i)\mid1\leq i\leq n\}$. It
is easy to see that 
\[
(i,j)e=(i,j)\iff e=(j,j)
\]
 for $e\in E$. Therefore, the set of right identities of $(i,j)$
contains a unique idempotent from $E$ and the dual claim holds for
left identities. Hence, $S$ is a reduced $E$-Fountain semigroup.
It is also easy to check that $(i_{1},j_{1})\trianglelefteq_{l}(i_{2},j_{2})$
if and only if $i_{1}=i_{2}$ . Therefore, $a\trianglelefteq_{l}b$
if and only if $a\Rc b$ so $\trianglelefteq_{l}$ is in fact a symmetric
relation! On the other hand, $\leq$ is the trivial relation on $E$
so $\leq_{l}$ is also trivial on $S$. We give another simple observation
for future use. It is easy to verify that $(i_{1},j_{1})\Lt(i_{2},j_{2})$
if and only if $j_{1}=j_{2}$. Therefore, $\Lt=\Lc$ so $\Lt$ is
a right congruence and and dually $\Rt=\Rc$ is a left congruence
so $S$ satisfies also the congruence condition.
\end{example}
Clearly, we can also define a dual relation $a\trianglelefteq_{r}b\iff a=eb$
for some $e\in E$.

\subsection{The right ample and the generalized right ample conditions}

Let $S$ be a reduced $E$-Fountain semigroup which satisfies the
congruence condition. We say that the \emph{right ample condition}
(or \emph{right ample identity}) holds in $S$ if $ea=a(ea)^{\ast}$
for every $a\in S$ and $e\in E$. This is equivalent to the condition
$Ea\subseteq aE$ for every $a\in S$. The right ample condition is
well studied, but it is a too strong requirement for some of the monoids
we want to consider as we immediately show. 
\begin{lem}
\label{lem:RightAmpleImpliesSubband}If the right ample condition
holds then $E$ is a subband of $S$. 
\end{lem}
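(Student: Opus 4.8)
The plan is to prove the equivalent assertion that $ef\in E$ for all $e,f\in E$; once products of elements of $E$ lie again in $E$, the set $E$ is by definition a subsemigroup of idempotents, i.e. a subband.

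Fix $e,f\in E$. Since $f\in E\subseteq S$, I would instantiate the right ample identity $ea=a(ea)^{\ast}$ at $a=f$, obtaining $ef=f(ef)^{\ast}$. The decisive observation is that $f$ itself is a right identity of $ef$ lying in $E$: indeed $(ef)f=ef^{2}=ef$, so $f\in (ef)_{E}$. As $(ef)^{\ast}$ is by definition the minimum of $(ef)_{E}$ with respect to the natural partial order on idempotents (the remark following \propref{Reduced_E_Fountain}), this forces $(ef)^{\ast}\leq f$, and hence $f(ef)^{\ast}=(ef)^{\ast}f=(ef)^{\ast}$ by the very definition of $\leq$ on $E(S)$.

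Plugging this back into $ef=f(ef)^{\ast}$ yields $ef=(ef)^{\ast}\in E$, and since $e,f$ were arbitrary, $E$ is a subband. I do not anticipate any real obstacle here: the only points that need a moment's care are that an idempotent of $E$ is a legitimate substitution for the free variable $a$ in the identity, and that $g^{\ast}=g=g^{+}$ for every $g\in E$ (so that the $\ast$'s appearing above are meaningful and consistent) — both already recorded in the excerpt. Everything else is a two-line manipulation.
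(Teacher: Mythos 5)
Your proof is correct and is essentially identical to the paper's: both instantiate the right ample identity at $a=f$, observe $(ef)^{\ast}\leq f$ from $eff=ef$, and conclude $ef=f(ef)^{\ast}=(ef)^{\ast}\in E$. You have simply spelled out the justifications in slightly more detail.
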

\begin{proof}
Let $e,f\in E$. First note that $(ef)^{\ast}\leq f$ since $eff=ef$.
Now, the right ample identity implies that 
\[
ef=f(ef)^{\ast}=(ef)^{\ast}\in E
\]
so $E$ is a subband.
\end{proof}
\begin{lem}
\label{lem:SubbandImpliesEhresmann}Let $S$ be a reduced $E$-Fountain
semigroup. If $E$ is a subband of $S$ then $E$ is a subsemilattice.
\end{lem}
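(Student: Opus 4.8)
The plan is to prove directly that $ef=fe$ for all $e,f\in E$; since every element of $E$ is idempotent, this shows $E$ is a commutative subsemigroup of idempotents, which is exactly a subsemilattice. The only tool really available is the reduced identity $ef=e\iff fe=e$ (\propref{Reduced_E_Fountain}), which I will repeatedly invoke in the following shape: for any two idempotents $p,q\in E$ one has $pq=p$ if and only if $qp=p$. The point is that the hypothesis ``$E$ is a subband'' guarantees that $ef$, $fe$, and products such as $(ef)e$ all lie in $E$, hence are idempotents to which this biconditional may be applied; so the subband hypothesis enters precisely in order to make the reduced identity usable on these composite elements.

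First I would record two auxiliary identities. Fix $e,f\in E$. Since $e(ef)=(ee)f=ef$, applying the reduced identity to the pair $p=ef$, $q=e$ (for which $qp=p$ holds) gives $(ef)e=ef$; symmetrically, since $f(fe)=(ff)e=fe$, applying it to $p=fe$, $q=f$ gives $(fe)f=fe$. Next I would compute the two ``cross products'': using $ff=f$ and the first identity, $(ef)(fe)=e(ff)e=efe=(ef)e=ef$; using $ee=e$ and the second identity, $(fe)(ef)=f(ee)f=fef=(fe)f=fe$.

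The conclusion is now one more application of the reduced identity. With $p=ef$ and $q=fe$ (both in $E$), the computations above give $pq=ef=p$, hence $qp=p$, i.e.\ $(fe)(ef)=ef$; but we also computed $(fe)(ef)=fe$, so $ef=fe$, as desired. I do not expect a genuine obstacle here, since the argument is a short string of manipulations; the only thing to be careful about is applying the biconditional $pq=p\iff qp=p$ in the correct direction each time (always verifying the ``$qp=p$'' side first and deducing the ``$pq=p$'' side), and keeping track that each element to which it is applied is indeed an element of the subband $E$. Once $E$ is shown to be commutative it is by definition a subsemilattice, which completes the proof.
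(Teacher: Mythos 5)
Your proof is correct and uses the same essential idea as the paper: the subband hypothesis puts the products $ef$ and $fe$ inside $E$, so the reduced biconditional $pq=p\iff qp=p$ can be applied to them. The paper reaches the conclusion slightly more directly, deducing $efe=ef$ from $e(ef)=ef$ and $efe=fe$ from $(fe)e=fe$, whereas you take an extra (harmless) detour through the cross products $(ef)(fe)$ and $(fe)(ef)$.
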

\begin{proof}
Let $e,f\in E$. The fact that $e(ef)=ef$ implies that $(ef)e=ef$.
Likewise, $(fe)e=fe$ implies $efe=fe$ so $ef=fe$.
\end{proof}
\begin{defn}
A reduced $E$-Fountain semigroup which satisfies the congruence condition
is called \emph{$E$-Ehresmann }\cite{Gould2010,Gould2010b} if $E$
is a subsemilattice of $S$. 
\end{defn}
If we consider reduced $E$-Fountain semigroups, \lemref{RightAmpleImpliesSubband}
and \lemref{SubbandImpliesEhresmann} show that the right ample condition
is relevant only in the context of $E$-Ehresmann semigroups. We want
a condition which is applicable also for reduced $E$-Fountain semigroups
where $E$ is not a subband.
\begin{defn}
Let $S$ be a reduced $E$-Fountain semigroup which satisfies the
congruence condition. We say that the \emph{generalized right ample
condition }(or \emph{generalized right ample identity})\emph{ }holds
in $S$ if 
\[
\left(e\left(a\left(eaf\right)^{\ast}\right)^{+}\right)^{\ast}=\left(a\left(eaf\right)^{\ast}\right)^{+}
\]
for every $a\in S$ and $e,f\in E$.
\end{defn}
This identity can also be written as 
\[
\left(b^{\ast}\left(a\left(b^{\ast}ac^{\ast}\right)^{\ast}\right)^{+}\right)^{\ast}=\left(a\left(b^{\ast}ac^{\ast}\right)^{\ast}\right)^{+}
\]
for every $a,b,c\in S$ hence the class of reduced $E$-Fountain semigroups
which satisfy the congruence condition and the generalized right ample
identity is also a variety of bi-unary semigroups. 

Next, we show that if $S$ satisfies the right ample condition it
satisfies also the generalized right ample condition hence justifying
our term ``generalized''. We fix a reduced $E$-Fountain semigroup
$S$ which satisfies the congruence condition.
\begin{lem}
\label{lem:Right_restriction_aux1}If $E$ is a subsemilattice then
$a\trianglelefteq_{l}b\implies a^{\ast}\leq b^{\ast}$.
\end{lem}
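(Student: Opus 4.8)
The plan is to reduce the claim to the characterization of the congruence condition recorded above, namely that $S$ satisfies it if and only if $(ab)^{\ast}=(a^{\ast}b)^{\ast}$ for all $a,b\in S$ (together with the dual identity for $+$). Since $a\trianglelefteq_{l}b$, the previous lemma gives $a=ba^{\ast}$, so $a^{\ast}=(ba^{\ast})^{\ast}$. Applying the identity $(xy)^{\ast}=(x^{\ast}y)^{\ast}$ with $x=b$ and $y=a^{\ast}$ yields $a^{\ast}=(b^{\ast}a^{\ast})^{\ast}$.

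Now I would invoke the hypothesis that $E$ is a subsemilattice: both $b^{\ast}$ and $a^{\ast}$ lie in $E$, hence $b^{\ast}a^{\ast}\in E$, and for every $e\in E$ we have $e^{\ast}=e$ (noted just after \propref{Reduced_E_Fountain}). Therefore $(b^{\ast}a^{\ast})^{\ast}=b^{\ast}a^{\ast}$, and combining with the previous display we get $a^{\ast}=b^{\ast}a^{\ast}$. Using commutativity of $E$ once more, $a^{\ast}=b^{\ast}a^{\ast}=a^{\ast}b^{\ast}$, so $a^{\ast}b^{\ast}=b^{\ast}a^{\ast}=a^{\ast}$, which is exactly the statement $a^{\ast}\leq b^{\ast}$ in the natural partial order on idempotents.

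There is no real obstacle here; the only point requiring care is to apply the correct form of the congruence-condition identity (the $\ast$-version $(ab)^{\ast}=(a^{\ast}b)^{\ast}$ rather than the $+$-version) and to remember that subsemilattice means $E$ is both a subband and commutative, which is what makes $b^{\ast}a^{\ast}$ an idempotent of $E$ and lets us pass between $b^{\ast}a^{\ast}$ and $a^{\ast}b^{\ast}$. Everything else is a one-line substitution.
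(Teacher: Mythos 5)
Your proof is correct, but it takes a different route from the paper's. You reduce the claim to the congruence-condition identity $(ba^{\ast})^{\ast}=(b^{\ast}a^{\ast})^{\ast}$ and then use that $E$ is a subsemilattice to evaluate $(b^{\ast}a^{\ast})^{\ast}=b^{\ast}a^{\ast}$, obtaining $a^{\ast}=b^{\ast}a^{\ast}=a^{\ast}b^{\ast}$. The paper instead argues purely at the level of elements: from $a=ba^{\ast}$ it computes
\[
ab^{\ast}=ba^{\ast}b^{\ast}=bb^{\ast}a^{\ast}=ba^{\ast}=a,
\]
so $b^{\ast}$ is a right identity of $a$, i.e.\ $b^{\ast}\in a_{E}$, and since $a^{\ast}$ is the minimum of $a_{E}$ in the natural partial order, $a^{\ast}\leq b^{\ast}$. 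Both arguments use commutativity of $E$ in one step, but the paper's version does not invoke the congruence condition at all, so it establishes the lemma for any reduced $E$-Fountain semigroup with $E$ a subsemilattice; your version relies on the standing hypothesis that the congruence condition holds (which is legitimately in force in this section, but is not actually needed). Otherwise your argument is complete: you correctly supply both $a^{\ast}b^{\ast}=a^{\ast}$ and $b^{\ast}a^{\ast}=a^{\ast}$, which is what the natural partial order on idempotents requires.
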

\begin{proof}
If $a=ba^{\ast}$ then $ab^{\ast}=ba^{\ast}b^{\ast}=bb^{\ast}a^{\ast}=ba^{\ast}=a$
so $a^{\ast}\leq b^{\ast}$.
\end{proof}
We record the following immediate corollary.
\begin{cor}
\label{cor:EqualityOfLeqs}If $E$ is a subsemilattice then $\trianglelefteq_{l}=\leq_{l}$.
\end{cor}
\begin{prop}
\label{prop:Standard_right_ample_implies_generalised}If the right
ample condition holds in $S$ then so does the generalized right ample
condition.
\end{prop}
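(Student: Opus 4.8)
The plan is to start from the right ample identity $ea = a(ea)^{\ast}$ and show that the left-hand side of the generalized right ample identity simplifies to its right-hand side. By \lemref{RightAmpleImpliesSubband} and \lemref{SubbandImpliesEhresmann}, the right ample condition forces $E$ to be a subsemilattice, so $S$ is $E$-Ehresmann and we may freely use commutativity of idempotents as well as \corref{EqualityOfLeqs}, i.e.\ $\trianglelefteq_{l} = \leq_{l}$. Fix $a \in S$ and $e, f \in E$, and abbreviate $g = (eaf)^{\ast} \in E$ and $c = a g$, so that the claim is $\bigl(e c^{+}\bigr)^{\ast} = c^{+}$.

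First I would record the elementary consequences of the right ample identity in the $E$-Ehresmann setting that we will actually need: for any $s \in S$ and $h \in E$ one has $hs = s(hs)^{\ast}$, hence $hs \trianglelefteq_{l} s$ and therefore $hs \leq_{l} s$, which in particular gives $(hs)^{\ast} \leq s^{\ast}$ and $(hs)^{+} \leq s^{+}$ via \lemref{Right_restriction_aux1} (applied on the appropriate side — the dual of $\trianglelefteq_{l}$). The key point is that $c = ag$ with $g = (eaf)^{\ast} \in E$, so $eaf = a g$ by the defining property $aa^{\ast}=a$ together with $(eaf)^{\ast}\le f$; more precisely $eaf = (eaf)(eaf)^{\ast} = (eaf)g$, and since $(eaf)g = ea(fg) = ea g$ (using $g \leq f$ so $fg = g$), we get $e c = e a g = eaf = c$. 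Thus $e$ is a \emph{left} identity of $c$, which means $c^{+} \leq e$ in $E$, i.e.\ $e c^{+} = c^{+}$. Since $c^{+} \in E$ is already an idempotent, $\bigl(e c^{+}\bigr)^{\ast} = (c^{+})^{\ast} = c^{+}$, which is exactly the generalized right ample identity.

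The only subtle point — and the step I would be most careful about — is the identity $e c = c$, i.e.\ that $e$ is a left identity for $c = a(eaf)^{\ast}$. This rests on the computation $eaf = ea g$ where $g = (eaf)^{\ast}$, which needs $(eaf)^{\ast} \leq f$ (immediate, since $(eaf)f = eaf$) together with $fg = gf = g$ (commutativity of $E$). Once $eaf = eag$ is established, applying the right ample identity to rewrite $eag$: we have $e(ag) = (ag)\bigl(e(ag)\bigr)^{\ast} = c \cdot (ec)^{\ast}$, so $c(ec)^{\ast} = eag = eaf$; but also $c(ec)^{\ast} \trianglelefteq_{l} c$ with the same $\ast$-value as $ec$, and one checks $(ec)^{\ast} = c^{\ast}$ directly (since $ec = eaf$ has $(eaf)^{\ast} = g = c^{\ast}$, the last equality because $c = ag$ forces $c^{\ast} = (ag)^{\ast} = g^{\ast} = g$ as $g \in E$ and $S$ satisfies the congruence condition). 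Hence $c(ec)^{\ast} = c c^{\ast} = c$, giving $ec = c$ after one more application of $ec = c(ec)^{\ast}$. I would streamline this chain into two or three displayed equalities rather than the bookkeeping above, but the content is entirely routine manipulation of the $\ast$ and $+$ operations once commutativity of $E$ is in hand; no genuine obstacle remains.
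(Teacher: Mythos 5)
Your proof is correct and follows essentially the same route as the paper's: both reduce the claim to showing that $e$ is a left identity of $c=a(eaf)^{\ast}$, transfer this to $c^{+}$ via $c\mathrel{\Rt}c^{+}$, and conclude since elements of $E$ are fixed by ${}^{\ast}$. The paper obtains $ec=c$ a bit more directly from $(eaf)^{\ast}\leq(ea)^{\ast}$ (\lemref{Right_restriction_aux1}) together with $ea=a(ea)^{\ast}$, whereas you route through $eaf=ea(eaf)^{\ast}$ and $c^{\ast}=(eaf)^{\ast}$, but the content is the same.
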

\begin{proof}
Let $e,f\in E$ and $a\in S$. By \lemref{RightAmpleImpliesSubband}
and \lemref{SubbandImpliesEhresmann} we know that $E$ is a semilattice.
Since $eaf\trianglelefteq_{l}ea$, \lemref{Right_restriction_aux1}
implies that $(ea)^{\ast}(eaf)^{\ast}=(eaf)^{\ast}$. This and the
right ample identity implies that 
\[
ea(eaf)^{\ast}=a(ea)^{\ast}(eaf)^{\ast}=a(eaf)^{\ast}
\]
so $e$ is a left identity of $a(eaf)^{\ast}$. Since $a(eaf)^{\ast}\Rt(a(eaf)^{\ast})^{+}$
we obtain $e(a(eaf)^{\ast})^{+}=(a(eaf)^{\ast})^{+}$ as well. Finally,
\[
\left(e\left(a\left(eaf\right)^{\ast}\right)^{+}\right)^{\ast}=\left(\left(a\left(eaf\right)^{\ast}\right)^{+}\right)^{\ast}=\left(a\left(eaf\right)^{\ast}\right)^{+}
\]
 so $S$ satisfies the generalized right ample condition.
\end{proof}
\begin{rem}
We can of course define the dual notion. We say that $S$ satisfies
the \emph{generalized left ample condition }if the identity
\[
\left(\left(\left(fae\right)^{+}a\right)^{\ast}e\right)^{+}=\left(\left(fae\right)^{+}a\right)^{\ast}
\]
holds for every $a\in S$ and $e,f\in E$. 
\end{rem}
We give here two examples of semigroups which satisfy the generalized
right ample condition but not the standard one. Although the main
theorem of this paper (\thmref{isomorphism_theorem}) is not applicable
in these examples as we will see later, we hope they offer conviction
that the generalized right ample identity is not an artificial condition.
The main motivating example - the Catalan monoid - is postponed until
\secref{Catalan_monoid}.
\begin{example}
\label{exa:Rectangular_band_is_weak_right_restriction}The rectangular
band from \exaref{Square_rectangular_band} satisfies the generalized
right ample condition but not the right ample condition. If $\left(a\left(eaf\right)^{\ast}\right)^{+}=(i,i)$
and $e=(k,k)$ then 
\[
\left(e\left(a\left(eaf\right)^{\ast}\right)^{+}\right)^{\ast}=\left((k,k)(i,i)\right)^{\ast}=(k,i)^{\ast}=(i,i)=\left(a\left(eaf\right)^{\ast}\right)^{+}
\]
so the required equality holds for every $e,f\in E$, and $a\in S$.
On the other hand, $E$ is not a band so the right ample condition
doesn't hold. We remark that it is equally easy to see that the left
generalized ample identity holds in this case.
\end{example}
\begin{example}[\cite{Stokes2015}]
 Let $H$ be a Hilbert space and let $L(H)$ be the algebra of all
bounded linear operators on $H$ (this is one of the main examples
of a Rickart $\ast$-ring and a Baer $\ast$-ring \cite{berberian1988baer}).
For every closed subspace $U\subseteq H$ we associate the orthogonal
projection $P_{U}\in L(H)$ onto it. Recall that $P_{U}$ is an idempotent
and $\im(P_{U})=U=\ker(P_{U})^{\perp}$ (where $\im T$ and $\ker T$
are the image and kernel of the linear operator $T$ and $V^{\perp}$
is the orthogonal complement of $V$). It is easy to check that 
\[
P_{U}P_{V}=P_{U}\iff P_{V}P_{U}=P_{U}\iff U\subseteq V.
\]
Now, consider $L(H)$ as a multiplicative monoid. This is an example
of a Baer $\ast$-semigroup \cite{Foulis1960}. If we set $E=\{P_{U}\mid U\text{ is a closed subspace of }H\}$,
it is not difficult to check that for $T,S\in L(H)$
\[
T\Lt S\iff\ker(T)=\ker(S),\qquad T\Rt S\iff\im(T)=\im(S).
\]
It follows that $L(H)$ is a reduced $E$-Fountain semigroup where
$T^{\ast}=P_{(\ker T)^{\perp}}$ and $T^{+}=P_{\im(T)}$.
\end{example}
\begin{lem}
$L(H)$ satisfies the congruence condition.
\end{lem}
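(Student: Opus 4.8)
The plan is to check the congruence condition directly from the descriptions of $\Lt$ and $\Rt$ recorded just above: $T\Lt S$ exactly when $\ker T=\ker S$, and $T\Rt S$ exactly when $\overline{\im T}=\overline{\im S}$ (here one should read the image of an operator as its closure whenever a projection onto it is formed, as in $T^{+}=P_{\overline{\im T}}$; this closure is the smallest closed subspace containing $\im T$). Thus I must show that $\ker T=\ker S$ forces $\ker(TR)=\ker(SR)$ for every $R\in L(H)$, and dually that $\overline{\im T}=\overline{\im S}$ forces $\overline{\im(RT)}=\overline{\im(RS)}$ for every $R$.

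The first assertion is purely set-theoretic: $\ker(TR)=\{x\in H\mid Rx\in\ker T\}=R^{-1}(\ker T)$, which depends only on $R$ and on the set $\ker T$. Hence $\ker T=\ker S$ gives $\ker(TR)=R^{-1}(\ker T)=R^{-1}(\ker S)=\ker(SR)$, that is, $TR\Lt SR$, so $\Lt$ is a right congruence. For the second assertion, note $\im(RT)=R(\im T)$, so it suffices to see that $\overline{R(\im T)}$ depends only on $\overline{\im T}$; concretely, that $\overline{R(\im T)}=\overline{R(\overline{\im T})}$. The inclusion ``$\subseteq$'' is just monotonicity of closure applied to $\im T\subseteq\overline{\im T}$, and ``$\supseteq$'' holds because $R$ is continuous, so $R(\overline{\im T})\subseteq\overline{R(\im T)}$ and therefore $\overline{R(\overline{\im T})}\subseteq\overline{R(\im T)}$. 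Now $\overline{\im T}=\overline{\im S}$ yields $\overline{\im(RT)}=\overline{R(\overline{\im T})}=\overline{R(\overline{\im S})}=\overline{\im(RS)}$, that is, $RT\Rt RS$, so $\Rt$ is a left congruence.

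There is no real obstacle here; the only subtlety is that the range of a bounded operator on a Hilbert space need not be closed, which is why $\Rt$ and the unary operation $+$ must be phrased via closures of images, and why the left-congruence step has to invoke the continuity of $R$ rather than a naive set-theoretic manipulation. One could equally well verify the equivalent pair of identities $(ab)^{\ast}=(a^{\ast}b)^{\ast}$ and $(ab)^{+}=(ab^{+})^{+}$ from \cite[Lemma 4.1]{Gould2010}, using $\ker T^{\ast}=\ker T$ for the first and $\im S^{+}=\overline{\im S}$ together with continuity of $T$ for the second; this is the same computation in a slightly different dress.
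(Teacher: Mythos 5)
Your proof is correct and follows the same basic strategy as the paper's: verify the two congruence conditions directly from the descriptions $T\Lt S\iff\ker T=\ker S$ and $T\Rt S\iff$ equality of images, with the $\Lt$ half being the purely set-theoretic observation $\ker(TR)=R^{-1}(\ker T)$. The $\Lt$ halves of the two arguments are essentially identical. Where you genuinely diverge is in the $\Rt$ half: the paper argues set-theoretically that $\im(T)=\im(S)$ implies $\im(RT)=R(\im T)=R(\im S)=\im(RS)$, treating images as literal sets, whereas you observe that since $E$ consists of projections onto \emph{closed} subspaces, the relation $\Rt$ and the operation $T^{+}=P_{\overline{\im T}}$ are really governed by closures of images (the range of a bounded operator need not be closed), and you therefore prove the stronger statement that $\overline{\im T}=\overline{\im S}$ implies $\overline{\im(RT)}=\overline{\im(RS)}$, using continuity of $R$ to get $\overline{R(\overline{\im T})}=\overline{R(\im T)}$. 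This extra step is not pedantry: two operators can have equal closures of images without having equal images, so they can be $\Rt$-related without the paper's hypothesis holding, and in that situation the paper's set-theoretic manipulation does not by itself yield the conclusion. Your version buys a proof that covers all $\Rt$-classes as they are actually defined by $E$; the paper's version buys brevity at the cost of implicitly conflating $\im T$ with its closure.
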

\begin{proof}
Let $T,S,R\in L(H)$. First assume $T\Lt S$ so $\ker(T)=\ker(S)$.
We have 
\[
x\in\ker(TR)\iff R(x)\in\ker T\iff R(x)\in\ker S\iff x\in\ker(SR)
\]
so $\ker(TR)=\ker(SR)$ and $\Lt$ is a right congruence. If $T\Rt S$
then $\im(T)=\im(S)$. In this case 
\[
x\in\im(RT)\iff\exists y\in\im(T),\quad R(y)=x\iff\exists y\in\im(S),\quad R(y)=x\iff x\in\im(RS)
\]
so $\Rt$ is a left congruence.
\end{proof}
Note that in general $P_{U}P_{V}\neq P_{V}P_{U}$ so the right ample
identity does not hold and $L(H)$ is not an $E$-Ehresmann semigroup.
\begin{lem}
$L(H)$ satisfies the generalized right ample identity.
\end{lem}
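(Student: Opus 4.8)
The plan is to rephrase everything in terms of closed subspaces of $H$, where both unary operations are transparent. Write $e=P_{U}$, $f=P_{W}$, $a=T$, and recall $\ker P_{V}=V^{\perp}$, $(P_{V})^{\ast}=(P_{V})^{+}=P_{V}$, $T^{\ast}=P_{(\ker T)^{\perp}}$ and $T^{+}=P_{\overline{\im T}}$ (the projection onto the closure of the range). The first step is the elementary formula for a product of two projections: since $\ker(P_{U}P_{V})=(V\cap U^{\perp})\oplus V^{\perp}$, we get $(P_{U}P_{V})^{\ast}=P_{V\ominus(V\cap U^{\perp})}$, where $V\ominus(V\cap U^{\perp})$ is the orthogonal complement of $V\cap U^{\perp}$ taken inside $V$. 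Applying this with $V=\overline{\im\!\left(a(eaf)^{\ast}\right)}$, which is exactly the range of $\left(a(eaf)^{\ast}\right)^{+}$, reduces the generalized right ample identity at $(a,e,f)$ to the single geometric assertion
\[
\overline{\im\!\left(a(eaf)^{\ast}\right)}\,\cap\,\ker e=\{0\}.
\]

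The second step is to unwind the subspace in question. A short computation gives $\ker(P_{U}TP_{W})=\bigl(W\cap T^{-1}(U^{\perp})\bigr)\oplus W^{\perp}$, so $(eaf)^{\ast}=P_{M}$ with $M=W\ominus\bigl(W\cap T^{-1}(U^{\perp})\bigr)$, and therefore $\im\!\left(a(eaf)^{\ast}\right)=T(M)$. The ``uncompleted'' form of the claim is then immediate: if $y=Tw$ with $w\in M$ and $P_{U}y=0$, then $Tw\in U^{\perp}$, so $w\in T^{-1}(U^{\perp})$, hence $w\in W\cap T^{-1}(U^{\perp})$; but $w$ is orthogonal to that subspace, so $w=0$ and $y=0$. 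Equivalently, since $eaf=(ea)f$ we have $eaf\trianglelefteq_{l}ea$, hence $ea(eaf)^{\ast}=eaf$ by the characterization $x\trianglelefteq_{l}y\iff x=yx^{\ast}$; this forces $\ker\!\left(e\cdot a(eaf)^{\ast}\right)=\ker(eaf)=M^{\perp}=\ker\!\left(a(eaf)^{\ast}\right)$, i.e.\ $P_{U}$ is injective on $\im\!\left(a(eaf)^{\ast}\right)$.

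The main obstacle is closing the gap between $\im\!\left(a(eaf)^{\ast}\right)$ and its closure: mere injectivity of $P_{U}$ on a subspace does not prevent a nonzero vector of $\ker e$ from appearing in its closure once $P_{U}$ fails to be bounded below there. To finish I would try to strengthen the injectivity statement to boundedness below of $P_{U}$ on $\im\!\left(a(eaf)^{\ast}\right)$ --- for instance by manufacturing from $T$ and $P_{M}$ a bounded operator that left-inverts $P_{U}$ on that subspace --- which would keep the intersection with $\ker e$ trivial after completion. This is the one step I do not see how to make routine; the rest is just the bookkeeping of kernels and ranges of products of orthogonal projections, together with the congruence identities already available.
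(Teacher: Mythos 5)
Your second paragraph is, in substance, the paper's entire proof. The paper sets $e=P_{U}$, $(eaf)^{\ast}=P_{W}$, $(a(eaf)^{\ast})^{+}=P_{V}$, reduces the identity to $V\cap U^{\perp}=0$, and then takes $V=\im\left(a(eaf)^{\ast}\right)$ \emph{without closure}, killing a vector $y=a(x)\in V\cap U^{\perp}$ exactly as you do: $x\in\ker(P_{U}af)^{\perp}\subseteq\ker(f)^{\perp}=\im(f)$ gives $f(x)=x$, so $P_{U}af(x)=P_{U}(y)=0$ and $x\in\ker(P_{U}af)\cap\ker(P_{U}af)^{\perp}=0$. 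So relative to the paper you have missed nothing; the paper consistently writes $T^{+}=P_{\im(T)}$ and never confronts non-closed ranges, which is precisely the point at which you stop.

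The obstacle you flag is, however, real, and it cannot be overcome: for $+$ to take values in $E$ one must set $T^{+}=P_{\overline{\im(T)}}$, and with that (correct) reading the identity fails for infinite-dimensional $H$. Take $H=\ell^{2}$ with orthonormal basis $\{u\}\cup\{w_{n}\}_{n\geq1}\cup\{v_{n}\}_{n\geq1}$, let $U=\{u\}^{\perp}$, $W=\overline{\Span}\{w_{n}\}$, and let $a$ be the bounded operator with $a(w_{n})=n^{-1}u+n^{-2}v_{n}$ and $a=0$ on $W^{\perp}$. Then $P_{U}aP_{W}(w_{n})=n^{-2}v_{n}$, so $\ker(P_{U}aP_{W})=W^{\perp}$ and $(eaf)^{\ast}=P_{W}$; hence $a(eaf)^{\ast}=aP_{W}$, and since $a\left(N^{-1}\sum_{n\leq N}nw_{n}\right)=u+N^{-1}\sum_{n\leq N}n^{-1}v_{n}\to u$, the nonzero vector $u\in U^{\perp}=\ker e$ lies in $V=\overline{\im\left(a(eaf)^{\ast}\right)}$, whence $(P_{U}P_{V})^{\ast}\neq P_{V}$. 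So your instinct is right that injectivity of $P_{U}$ on the uncompleted range does not survive closure, and no boundedness-below argument can rescue it in general. In short: you have reproduced the paper's proof and, on top of that, correctly diagnosed a gap that the paper leaves unacknowledged; the step you ``do not see how to make routine'' is in fact false, and the lemma as stated only holds under the paper's tacit assumption that the relevant ranges are closed (e.g.\ when $\dim H<\infty$).
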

\begin{proof}
Let $a\in L(H)$ and $e,f\in E$. Set $e=P_{U}$, $(eaf)^{\ast}=P_{W}$
and $\left(a(eaf)^{\ast}\right)^{+}=P_{V}$. We need to show that
$(P_{U}P_{V})^{\ast}=P_{V}$, which means that $\ker(P_{U}P_{V})=\ker(P_{V})$.
This is equivalent to $\im(P_{V})\cap\ker(P_{U})=0$ so we need to
prove that $V\cap U^{\perp}=0$ where $V=\im\left(a(P_{U}af)^{\ast}\right)$.
Let $\mbox{\ensuremath{y\in V\cap U^{\perp}}}$, then $y=a(x)$ for
$x\in\im(P_{U}af)^{\ast}=W=\ker(P_{U}af)^{\perp}$. The fact that
$\ker(f)\subseteq\ker(P_{U}af)$ implies that $W=\ker(P_{U}af)^{\perp}\subseteq\ker(f)^{\perp}$.
Since $f\in E$, $\ker(f)^{\perp}=\im(f)$ so $f(x)=x$ and 
\[
P_{U}af(x)=P_{U}a(x)=P_{U}(y)=0
\]
since $y\in U^{\perp}.$ This shows that $x\in\ker(P_{U}af)$, but
we already know that $x\in\ker(P_{U}af)^{\perp}$ so $x=0$ and $y=a(x)=0$
as required.
\end{proof}
\begin{rem}
$L(H)$ satisfies also the left ample identity. For $T\in L(H)$ we
denote by $T^{t}$ its adjoint operator (we do not use $T^{\ast}$
here to avoid confusion with the unary operation). It is clear that
$(P_{U})^{t}=P_{U}$ , $(T^{t})^{+}=T^{\ast}$ and $(T^{t})^{\ast}=T^{+}$
so every identity immediately implies its dual in $L(H)$.
\end{rem}
This example will not be considered further in this paper because
it does not satisfy the finiteness condition we will require later.

\section{\label{sec:Algebras_Section}The semigroup and category algebras}

\subsection{Homomorphism of algebras}

In this section we fix a reduced $E$-Fountain semigroup $S$ which
satisfies the congruence condition. We also assume that the relation
$\trianglelefteq_{l}$ is principally finite. Let $\mathcal{C}$ be
the associated category as defined in \secref{Theory_of_idempotents}
and let $\Bbbk$ be a commutative unital ring. Define $\varphi:\Bbbk S\to\Bbbk\mathcal{C}$
on basis elements by 
\[
\varphi(a)=\sum_{c\trianglelefteq_{l}a}C(c).
\]

It is clear that $\varphi$ is a $\Bbbk$-module homomorphism. We
want to show that it is an algebra homomorphism if and only if the
generalized right ample condition holds in $S$. For this we need
a different formulation of this identity. 
\begin{lem}
\label{lem:WRR_equivalent_definitions}The following conditions are
equivalent.
\begin{enumerate}
\item The semigroup $S$ satisfies the generalized right ample condition.
\item The implication $c\trianglelefteq_{l}ea\implies(e(ac^{\ast})^{+})^{\ast}=(ac^{\ast})^{+}$
holds for every $a,c\in S$ and $e\in E$.
\item \label{enu:WRR_def_3}The implication $c\trianglelefteq_{l}ba\implies(b(ac^{\ast})^{+})^{\ast}=(ac^{\ast})^{+}$
holds for every $\mbox{\ensuremath{a,b,c\in S}}$.
\end{enumerate}
\end{lem}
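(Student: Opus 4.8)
The plan is to prove the cyclic chain: the third condition implies the second, the second implies the first, and the first implies the third. The first two implications are pure specializations. For \emph{third $\Rightarrow$ second}, take the element $b$ in the implication $c\trianglelefteq_{l}ba\implies(b(ac^{\ast})^{+})^{\ast}=(ac^{\ast})^{+}$ to be an idempotent $e\in E$; since $e^{\ast}=e$ the conclusion is literally $(e(ac^{\ast})^{+})^{\ast}=(ac^{\ast})^{+}$ and the hypothesis is $c\trianglelefteq_{l}ea$, so this is exactly the second condition. For \emph{second $\Rightarrow$ first}, given an arbitrary $a\in S$ and $e,f\in E$, set $c:=eaf$; then $c=(ea)f$ with $f\in E$, so $c\trianglelefteq_{l}ea$ by the very definition of $\trianglelefteq_{l}$, and the second condition gives $\bigl(e\bigl(a(eaf)^{\ast}\bigr)^{+}\bigr)^{\ast}=\bigl(a(eaf)^{\ast}\bigr)^{+}$, which is the generalized right ample identity for this triple.

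The substantive step is \emph{first $\Rightarrow$ third}. Assume $c\trianglelefteq_{l}ba$, so by the lemma characterising $\trianglelefteq_{l}$ (namely $x\trianglelefteq_{l}y\iff x=yx^{\ast}$) we have $c=(ba)c^{\ast}$. Apply the congruence-condition identity $(xy)^{\ast}=(x^{\ast}y)^{\ast}$ with $x=b$, $y=ac^{\ast}$ to get $(b^{\ast}ac^{\ast})^{\ast}=(bac^{\ast})^{\ast}=c^{\ast}$, the last equality because $bac^{\ast}=c$. Hence $a(b^{\ast}ac^{\ast})^{\ast}=ac^{\ast}$, so feeding the idempotents $e:=b^{\ast}$, $f:=c^{\ast}$ and the element $a$ into the generalized right ample identity turns it into $\bigl(b^{\ast}(ac^{\ast})^{+}\bigr)^{\ast}=(ac^{\ast})^{+}$. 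Finally use $(xy)^{\ast}=(x^{\ast}y)^{\ast}$ once more, this time with $x=b$, $y=(ac^{\ast})^{+}$, to obtain $\bigl(b(ac^{\ast})^{+}\bigr)^{\ast}=\bigl(b^{\ast}(ac^{\ast})^{+}\bigr)^{\ast}=(ac^{\ast})^{+}$, which is the conclusion of the third condition.

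The only point needing insight is this last implication: one must recognise that the correct idempotents to substitute into the generalized right ample identity are $e=b^{\ast}$ and $f=c^{\ast}$, and that the congruence condition is invoked twice — first to collapse $(b^{\ast}ac^{\ast})^{\ast}$ to $c^{\ast}$ (so that the expression $a(eaf)^{\ast}$ appearing in the identity becomes the desired $ac^{\ast}$), and then to replace $b^{\ast}$ by $b$ inside the outer $\ast$. Everything else consists of routine substitutions, and the argument uses only the first lemma on $\trianglelefteq_{l}$ and the standard reformulation of the congruence condition already recalled in the paper.
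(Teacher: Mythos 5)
Your proof is correct and uses essentially the same ingredients as the paper's: the substitution $e=b^{\ast}$, $f=c^{\ast}$, the congruence-condition computation $(b^{\ast}ac^{\ast})^{\ast}=(bac^{\ast})^{\ast}=c^{\ast}$, and the final replacement of $b^{\ast}$ by $b$ inside the outer star. The only difference is organizational: you run the cycle $3\Rightarrow2\Rightarrow1\Rightarrow3$ and concentrate the substantive work in $1\Rightarrow3$, whereas the paper runs $1\Rightarrow2\Rightarrow3\Rightarrow1$ and splits that same work between $1\Rightarrow2$ and $2\Rightarrow3$.
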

\begin{proof}
$\quad$
\begin{itemize}
\item[$(1\implies2)$]  If $c\trianglelefteq_{l}ea$ then $c=eaf$ for some $f\in E$. The
equality $(e(ac^{\ast})^{+})^{\ast}=(ac^{\ast})^{+}$ follows immediately
from the generalized right ample identity.
\item[$(2\implies3)$]  Assume $c\trianglelefteq_{l}ba$ so $c=bac^{\ast}$. Denote $d=b^{\ast}ac^{\ast}$
so clearly $d\trianglelefteq_{l}b^{\ast}a$. Our assumption implies
\[
(b^{\ast}(ad^{\ast})^{+})^{\ast}=(ad^{\ast})^{+}.
\]
Now,  
\[
d^{\ast}=(b^{\ast}ac^{\ast})^{\ast}=(bac^{\ast})^{\ast}=c^{\ast}
\]
so the above equality can be written 
\[
(b^{\ast}(ac^{\ast})^{+})^{\ast}=(ac^{\ast})^{+}.
\]
Therefore, 
\[
(b(ac^{\ast})^{+})^{\ast}=(b^{\ast}(ac^{\ast})^{+})^{\ast}=(ac^{\ast})^{+}
\]
which finishes the proof.
\item[$(3\implies1)$] First substitute $b=e$ for $e\in E$ and then note that $eaf\trianglelefteq_{l}ea$
so we can choose $c=eaf$ and the assumption implies 
\[
\left(e\left(a\left(eaf\right)^{\ast}\right)^{+}\right)^{\ast}=\left(a\left(eaf\right)^{\ast}\right)^{+}
\]
as required. 
\end{itemize}
\end{proof}
\begin{thm}
\label{thm:main_thm_for_now}The module homomorphism $\varphi$ is
a homomorphism of $\Bbbk$-algebras if and only if the generalized
right ample identity holds in $S$.
\end{thm}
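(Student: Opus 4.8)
The plan is to compute $\varphi(b)\cdot\varphi(a)$ in $\Bbbk\mathcal{C}$ and compare it with $\varphi(ba)$, isolating exactly the algebraic condition that makes the two equal. Expanding the definition,
\[
\varphi(b)\cdot\varphi(a)=\Bigl(\sum_{d\trianglelefteq_{l}b}C(d)\Bigr)\cdot\Bigl(\sum_{c\trianglelefteq_{l}a}C(c)\Bigr)=\sum_{\substack{d\trianglelefteq_{l}b,\ c\trianglelefteq_{l}a\\ d^{\ast}=c^{+}}}C(dc),
\]
since the only surviving terms are those where the morphisms are composable, i.e. $\dom C(d)=\ran C(c)$, which reads $d^{\ast}=c^{+}$. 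On the other hand $\varphi(ba)=\sum_{x\trianglelefteq_{l}ba}C(x)$. So the theorem amounts to showing: the map $(d,c)\mapsto dc$ from the indexing set of the first sum is a bijection onto $\{x\in S\mid x\trianglelefteq_{l}ba\}$ for all $a,b\in S$ if and only if the generalized right ample identity holds; and when it fails, the two elements of $\Bbbk\mathcal{C}$ genuinely differ (here one needs $\Bbbk$ to distinguish basis elements, which it does since $\Bbbk\mathcal{C}$ is free on $\mathcal{C}^{1}$).

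The key structural step is to identify, for a composable pair $(d,c)$ with $d\trianglelefteq_{l}b$, $c\trianglelefteq_{l}a$, $d^{\ast}=c^{+}$, what the product $dc$ looks like and when it lies below $ba$. Writing $d=bd^{\ast}$ and $c=ac^{\ast}$, we get $dc=bd^{\ast}ac^{\ast}=bac^{\ast}$ using $d^{\ast}=c^{+}$ and the $\Rt$-relation $ac^{\ast}\,\Rt\,(ac^{\ast})^{+}=c^{+}$ (so $(ac^{\ast})^{+}\cdot ac^{\ast}=ac^{\ast}$, hence $d^{\ast}ac^{\ast}=c^{+}ac^{\ast}=ac^{\ast}$). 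Thus $dc=bac^{\ast}$; moreover $(dc)^{\ast}=(bac^{\ast})^{\ast}=(c^{\ast})^{\ast}=c^{\ast}$ because $c^{\ast}\leq$ is a right identity and congruence identities give $(bac^{\ast})^{\ast}=((ba)^{\ast}c^{\ast})^{\ast}$ which simplifies to $c^{\ast}$ when $c\trianglelefteq_{l}a$ forces the appropriate idempotent inequality — this is exactly the content packaged in \lemref{Uniqueness_of_restriction}. Consequently $dc\trianglelefteq_{l}ba$ and, by the uniqueness lemma, $dc$ is the \emph{unique} element below $ba$ with $\ast$-value $c^{\ast}$. This already shows the map is injective. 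For surjectivity: given $x\trianglelefteq_{l}ba$, set $c^{\ast}=x^{\ast}$, let $c=ax^{\ast}$ (so $c\trianglelefteq_{l}a$ with $c^{\ast}=x^{\ast}$), and we would want $d=b(ax^{\ast})^{+}$ to satisfy $d\trianglelefteq_{l}b$ with $d^{\ast}=c^{+}=(ax^{\ast})^{+}$ and the pair composable. The requirement $d\trianglelefteq_{l}b$ with $d=b(ac^{\ast})^{+}$ means $d=bd^{\ast}$, i.e. $\bigl(b(ac^{\ast})^{+}\bigr)^{\ast}=(ac^{\ast})^{+}$ — and this is \emph{precisely} condition \enuref{WRR_def_3} of \lemref{WRR_equivalent_definitions}, applicable because $x\trianglelefteq_{l}ba$ and $x^{\ast}=c^{\ast}$.

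So the forward direction is: assuming the generalized right ample identity (in the guise of \lemref{WRR_equivalent_definitions}\enuref{WRR_def_3}), the construction $x\mapsto(b(ax^{\ast})^{+},\,ax^{\ast})$ provides a genuine preimage, the pair is composable since $d^{\ast}=(ax^{\ast})^{+}=c^{+}$, and $dc=x$ by the uniqueness lemma (both are $\trianglelefteq_{l}ba$ with $\ast$-value $x^{\ast}$); hence $\varphi(b)\cdot\varphi(a)=\varphi(ba)$ and $\varphi$ is multiplicative (it is already known to be $\Bbbk$-linear). For the converse, if the identity fails there exist $a,b,c\in S$ with $c\trianglelefteq_{l}ba$ but $x:=b(ac^{\ast})^{+}$ has $x^{\ast}\neq(ac^{\ast})^{+}$; I would trace through which basis element $C(y)$ then appears with the wrong coefficient. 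Concretely the pair $d=b(ac^{\ast})^{+}$, $c'=ac^{\ast}$ is composable and contributes $C(dc')=C(bac^{\ast})$ to $\varphi(b)\cdot\varphi(a)$, but one checks $(dc')^{\ast}$ versus whether $dc'\trianglelefteq_{l}ba$ fails — more carefully, one exhibits a specific morphism present in $\varphi(ba)$ but not produced by any composable pair, or vice versa, so the coefficient of some $C(\cdot)$ differs and $\varphi(b)\cdot\varphi(a)\neq\varphi(ba)$ in the free module $\Bbbk\mathcal{C}$.

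\textbf{Main obstacle.} The delicate point is the bookkeeping in the converse direction: one must pin down \emph{which} basis element of $\Bbbk\mathcal{C}$ witnesses the discrepancy and verify its coefficient really changes, rather than having cancellations conspire to restore equality. Handling this cleanly probably requires showing that distinct composable pairs $(d,c)$ always yield distinct products $dc$ (injectivity, already sketched above via \lemref{Uniqueness_of_restriction}) so that no hidden cancellation occurs, and then that the failure of \lemref{WRR_equivalent_definitions}\enuref{WRR_def_3} produces either an $x\trianglelefteq_{l}ba$ with no preimage pair or a pair whose product escapes $\{x : x\trianglelefteq_{l}ba\}$. Everything else is a matter of substituting $d=bd^{\ast}$, $c=cc^{\ast}$ and invoking the $\ast$/$+$ identities and the congruence condition, which is routine given the lemmas already established.
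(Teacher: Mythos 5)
Your ``if'' direction is essentially the paper's proof: expand both sides, show that every product of a composable pair $(d,c)$ with $d\trianglelefteq_{l}b$, $c\trianglelefteq_{l}a$ equals $ba c^{\ast}$ and hence lies below $ba$, and use \lemref{Uniqueness_of_restriction} together with condition \enuref{WRR_def_3} of \lemref{WRR_equivalent_definitions} to show that $x\mapsto(b(ax^{\ast})^{+},\,ax^{\ast})$ inverts $(d,c)\mapsto dc$. Two small slips in the write-up: what condition \enuref{WRR_def_3} buys you is not ``$d\trianglelefteq_{l}b$'' (that holds by definition, since $d=b(ax^{\ast})^{+}$ with $(ax^{\ast})^{+}\in E$) but composability, i.e. $d^{\ast}=(ax^{\ast})^{+}=c^{+}$; and the identity $(dc)^{\ast}=c^{\ast}$ for a composable pair comes from the category composition rule ($b^{\ast}=a^{+}\implies(ba)^{\ast}=a^{\ast}$), not from \lemref{Uniqueness_of_restriction}.

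The ``main obstacle'' you flag in the converse is not actually an obstacle, and you already have everything needed to remove it. The paper does not argue by contrapositive: it assumes $\varphi(ba)=\varphi(b)\varphi(a)$, picks any $c\trianglelefteq_{l}ba$, and notes that since $C(c)$ occurs with coefficient $1$ on the left it must be produced by at least one composable pair on the right; your own forced-pair analysis ($(c^{\prime})^{\ast}=c^{\ast}$ forces $c^{\prime}=ac^{\ast}$ by uniqueness, composability forces $(c^{\prime\prime})^{\ast}=(ac^{\ast})^{+}$, hence $c^{\prime\prime}=b(ac^{\ast})^{+}$) then reads off $(b(ac^{\ast})^{+})^{\ast}=(ac^{\ast})^{+}$ directly, which is exactly condition \enuref{WRR_def_3}. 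Equivalently, in your contrapositive formulation: if the identity fails at some $c\trianglelefteq_{l}ba$, the same forced-pair analysis shows that \emph{no} composable pair can produce $C(c)$, so its coefficient on the right is $0$ against $1$ on the left; no cancellation bookkeeping is required, because coefficients of distinct basis elements of the free module $\Bbbk\mathcal{C}$ never interact and all contributions on the right come with coefficient $+1$.
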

\begin{proof}
We start with the ``if'' part. For $a,b\in S$, we need to show
that $\varphi(ba)=\varphi(b)\varphi(a)$. We need to prove that
\[
\sum_{c\trianglelefteq_{l}ba}C(c)=\sum_{c^{\prime\prime}\trianglelefteq_{l}b}C(c^{\prime\prime})\sum_{c^{\prime}\trianglelefteq_{l}a}C(c^{\prime}).
\]
First note that if $c^{\prime\prime}\trianglelefteq_{l}b$ and $c^{\prime}\trianglelefteq_{l}a$
and $C(c^{\prime\prime})C(c^{\prime})\neq0$ then $(c^{\prime\prime})^{\ast}=(c^{\prime})^{+}$
and so 
\[
c^{\prime\prime}c^{\prime}=b(c^{\prime\prime})^{\ast}c^{\prime}=b(c^{\prime})^{+}c^{\prime}=bc^{\prime}=ba(c^{\prime})^{\ast}
\]
 so $c^{\prime\prime}c^{\prime}\trianglelefteq_{l}ba$. Therefore,
every element on the right-hand side appears also on the left-hand
side. Now, take $c\trianglelefteq_{l}ba$ and define $c^{\prime}=ac^{\ast}$
and $c^{\prime\prime}=b(ac^{\ast})^{+}$. From the generalized right
ample condition we obtain
\[
(c^{\prime\prime})^{\ast}=(b(ac^{\ast})^{+})^{\ast}=(ac^{\ast})^{+}=(c^{\prime})^{+}.
\]

Therefore, the composition $C(c^{\prime\prime})\cdot C(c^{\prime})$
is defined. Now 
\[
c^{\prime\prime}c^{\prime}=b(ac^{\ast})^{+}ac^{\ast}=bac^{\ast}=c
\]
so every element from the left-hand side appears on the right-hand
side. It remains to show that it appears only once. Assume $C(c)=C(d^{\prime\prime})C(d^{\prime})$
for $d^{\prime\prime}\trianglelefteq_{l}b$ and $d^{\prime}\trianglelefteq_{l}a$.
Then $(d^{\prime})^{\ast}=c^{\ast}$ so by \lemref{Uniqueness_of_restriction}
we must have $d^{\prime}=ac^{\ast}=c^{\prime}$. Since $C(d^{\prime\prime})\cdot C(d^{\prime})$
is defined we must have $(d^{\prime\prime})^{\ast}=(d^{\prime})^{+}=(ac^{\ast})^{+}$
so again \lemref{Uniqueness_of_restriction} implies $d^{\prime\prime}=b(ac^{\ast})^{+}=c^{\prime\prime}$.
This proves uniqueness.

The ``only if'' part comes from another examination of the above
argument. Let $a,b\in S$ , if $\varphi$ is a homomorphism of algebras
we must have $\varphi(ba)=\varphi(b)\varphi(a)$ so
\[
\sum_{c\trianglelefteq_{l}ba}C(c)=\sum_{c^{\prime\prime}\trianglelefteq_{l}b}C(c^{\prime\prime})\sum_{c^{\prime}\trianglelefteq_{l}a}C(c^{\prime}).
\]
Choose any $c\trianglelefteq_{l}ba$ from the left-hand side. The
morphism $C(c)$ must appear on the right-hand side. So there exists
$c^{\prime}\trianglelefteq_{l}a$ and $c^{\prime\prime}\trianglelefteq_{l}b$
such that $C(c)=C(c^{\prime\prime})C(c^{\prime})$. This implies $(c^{\prime})^{\ast}=c^{\ast}$
so $c^{\prime}=ac^{\ast}$. Since the product $C(c)=C(c^{\prime\prime})C(c^{\prime})$
is defined it must be the case that $(c^{\prime\prime})^{\ast}=(ac^{\ast})^{+}$
and therefore $c^{\prime\prime}=b(ac^{\ast})^{+}$. This implies $(b(ac^{\ast})^{+})^{\ast}=(ac^{\ast})^{+}$
as required.
\end{proof}

\subsection{Isomorphism of algebras}

In general, $\varphi$ is not an isomorphism. For instance, consider
\exaref{Square_rectangular_band}. We have already seen (\exaref{Rectangular_band_is_weak_right_restriction})
that it satisfies the requirements of \thmref{main_thm_for_now} so
$\varphi$ is an homomorphism of $\Bbbk$-algebras. But in this example
we have $\varphi(a)=\varphi(b)$ if $a\Rc b$ so $\varphi$ is not
injective. 

However, we can prove that $\varphi$ is an isomorphism if $\trianglelefteq_{l}$
is contained in a (principally finite) partial order. 
\begin{lem}
\label{lem:containment_in_poset}Let $S$ be a reduced $E$-Fountain
semigroup which satisfies the congruence condition. Assume $\trianglelefteq_{l}\subseteq\preceq$
where $\preceq$ is some principally finite partial order. Then $\varphi$
is an isomorphism of $\Bbbk$-modules. 
\end{lem}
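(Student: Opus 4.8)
The plan is to show $\varphi$ is injective and surjective as a $\Bbbk$-module map by exhibiting it as a ``unitriangular'' change of basis with respect to the partial order $\preceq$. The key observation is that $\varphi(a) = \sum_{c \trianglelefteq_l a} C(c)$ always has the term $C(a)$ appearing (since $a = a a^\ast$, i.e. $a \trianglelefteq_l a$), and every other term $C(c)$ with $c \trianglelefteq_l a$, $c \neq a$, satisfies $c \prec a$ by the hypothesis $\trianglelefteq_l \subseteq \preceq$ together with antisymmetry of $\preceq$. So $\varphi(a) = C(a) + \sum_{c \prec a} \lambda_{a,c}\, C(c)$ for suitable coefficients $\lambda_{a,c} \in \{0,1\}$.

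\textbf{Surjectivity.} First I would note that $\{C(a) \mid a \in S\}$ is a $\Bbbk$-basis of $\Bbbk\mathcal{C}$, since morphisms of $\mathcal{C}$ are in bijection with elements of $S$ via $a \mapsto C(a)$. It then suffices to show each basis morphism $C(a)$ lies in the image of $\varphi$. I would argue by induction on the (finite, by principal finiteness of $\preceq$) set $\{c \mid c \preceq a\}$: the base case is when $a$ is minimal, where $\varphi(a) = C(a)$; for the inductive step, $\varphi(a) - C(a) = \sum_{c \prec a} \lambda_{a,c} C(c)$ is a combination of morphisms strictly below $a$, each of which is in $\im\varphi$ by induction, hence $C(a) = \varphi(a) - \sum_{c \prec a}\lambda_{a,c}C(c) \in \im\varphi$.

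\textbf{Injectivity.} Suppose $\sum_{a} k_a \varphi(a) = 0$ with finitely many $k_a \neq 0$; I must show all $k_a = 0$. If not, pick $a$ with $k_a \neq 0$ that is $\preceq$-maximal among $\{a \mid k_a \neq 0\}$ (possible since this set is finite). The coefficient of $C(a)$ in $\sum_b k_b \varphi(b)$ is $\sum_{b \,:\, a \trianglelefteq_l b} k_b$. For any such $b \neq a$ we have $a \prec b$ (again by $\trianglelefteq_l \subseteq \preceq$ and antisymmetry), contradicting maximality of $a$ unless $k_b = 0$; so the coefficient of $C(a)$ reduces to $k_a \neq 0$, contradicting that the sum is $0$. (Alternatively, one can observe that this is the standard fact that a unitriangular matrix over $\Bbbk$ with respect to a principally finite poset is invertible, but since we are in a possibly infinite setting it is cleaner to run the maximality/minimality arguments directly.)

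\textbf{The main obstacle} is purely bookkeeping about the infinite setting: $S$ need not be finite, so one cannot literally invoke ``unitriangular matrices are invertible''; instead one must use principal finiteness of $\preceq$ to get, on the surjectivity side, a well-founded induction on down-sets, and on the injectivity side, the existence of a $\preceq$-maximal element in the finite support of any relation among the $\varphi(a)$. Once one is careful that all the relevant sets ($\{c \mid c \preceq a\}$ and the supports) are finite, both directions are routine. I would also take a moment to record explicitly that $a \trianglelefteq_l b$ with $a \neq b$ forces $a \prec b$ (not just $a \preceq b$), which is where antisymmetry of the containing order $\preceq$ — the property $\trianglelefteq_l$ itself lacks, per \exaref{Square_rectangular_band} — does the essential work.
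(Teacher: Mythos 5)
Your proof is correct, and it rests on the same structural fact as the paper's --- that $\varphi$ is unitriangular with respect to $\preceq$, since $C(a)$ occurs in $\varphi(a)$ with coefficient $1$ and every other term $C(c)$ has $c\prec a$ --- but the execution is different. The paper works inside the incidence algebra $\Bbbk[\preceq]$ (set up in the preliminaries): it defines the zeta function $\zeta_{l}$ of $\trianglelefteq_{l}$, notes $\zeta_{l}(a,a)=1$ so that $\zeta_{l}$ is invertible, and then writes down an explicit inverse $\psi(C(a))=\sum_{b\preceq a}\zeta_{l}^{-1}(b,a)\,b$, verifying $\psi\circ\varphi=\id$ by the convolution identity $\zeta_{l}^{-1}\star\zeta_{l}=\delta$. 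You instead prove bijectivity directly: surjectivity by well-founded induction on the cardinality of the principal down-set $\{c\mid c\preceq a\}$ (which is where principal finiteness enters), and injectivity by extracting a $\preceq$-maximal element of the finite support of a putative relation. Both arguments are sound in the infinite setting; your version is more elementary and self-contained, while the paper's buys an explicit closed-form inverse (a M\"obius-type function) at the cost of invoking the invertibility criterion for incidence algebras. You are also right to flag, as the essential point, that antisymmetry of the ambient order $\preceq$ is what turns $c\trianglelefteq_{l}a$, $c\neq a$ into $c\prec a$ --- this is exactly the property $\trianglelefteq_{l}$ itself lacks and the reason the hypothesis $\trianglelefteq_{l}\subseteq\preceq$ is needed at all.
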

\begin{proof}
Consider the incidence algebra $\Bbbk[\preceq]$. Define $\zeta_{l}\in\Bbbk[\preceq]$
to be the zeta function of $\trianglelefteq_{l}$:
\[
\zeta_{l}(a,b)=\begin{cases}
1 & a\trianglelefteq_{l}b\\
0 & \text{otherwise}
\end{cases}
\]
As $\zeta_{l}(a,a)=1$ for every $a\in A$, we know that $\zeta_{l}$
has an inverse $\zeta_{l}^{-1}$. This means that 
\[
\zeta_{l}^{-1}\star\zeta_{l}(a,b)=\sum_{a\preceq c\preceq b}\zeta_{l}^{-1}(a,c)\zeta_{l}(c,b)=\delta(a,b)=\begin{cases}
1 & a=b\\
0 & a\neq b
\end{cases}
\]
and likewise 
\[
\zeta_{l}\star\zeta_{l}^{-1}(a,b)=\delta(a,b).
\]
Note that 
\[
\varphi(a)=\sum_{b\trianglelefteq_{l}a}C(b)=\sum_{b\preceq a}\zeta_{l}(b,a)C(b).
\]
The inverse of $\varphi$ is given by 
\[
\psi(C(a))=\sum_{b\preceq a}\zeta_{l}^{-1}(b,a)b.
\]
Indeed 
\begin{align*}
\psi(\varphi(a)) & =\psi(\sum_{b\preceq a}\zeta_{l}(b,a)C(b))\\
 & =\sum_{b\preceq a}\zeta_{l}(b,a)\psi(C(b))\\
 & =\sum_{b\preceq a}\zeta_{l}(b,a)\left(\sum_{c\preceq b}\zeta_{l}^{-1}(c,b)c\right)\\
 & =\sum_{c\preceq a}c\sum_{c\preceq b\preceq a}\zeta_{l}^{-1}(c,b)\zeta_{l}(b,a)\\
 & =\sum_{c\preceq a}c\delta(c,a)=a
\end{align*}
and a similar argument shows that $\varphi(\psi(C(a))=C(a)$.
\end{proof}
For future reference we state clearly the following immediate corollary
of \thmref{main_thm_for_now} and \lemref{containment_in_poset}.
\begin{thm}
\label{thm:isomorphism_theorem}If $\trianglelefteq_{l}$ is contained
in some (principally finite) partial order then $\varphi$ is an isomorphism
of $\Bbbk$-algebras if and only if the generalized right ample condition
holds in $S$.
\end{thm}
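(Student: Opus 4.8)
The plan is essentially to observe that \thmref{isomorphism_theorem} is nothing more than the conjunction of two facts already in hand, so the ``proof'' consists of splicing them together carefully and checking that the hypotheses line up. From \lemref{containment_in_poset} we know that the standing assumption $\trianglelefteq_{l}\subseteq\preceq$ for some principally finite partial order $\preceq$ already forces $\varphi$ to be a bijective $\Bbbk$-module homomorphism. From \thmref{main_thm_for_now} we know that, under the standing hypotheses of \secref{Algebras_Section} (which include principal finiteness of $\trianglelefteq_{l}$ itself — note this follows from $\trianglelefteq_{l}\subseteq\preceq$ since a subrelation of a principally finite relation is principally finite), the map $\varphi$ is multiplicative if and only if the generalized right ample identity holds in $S$.

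So the first step I would take is to record that the hypothesis of \thmref{isomorphism_theorem} does imply the running hypothesis of the section, namely that $\trianglelefteq_{l}$ is principally finite: if $\{x \mid x \preceq a\}$ is finite then its subset $\{x \mid x\trianglelefteq_{l}a\}$ is finite as well. This legitimizes invoking \thmref{main_thm_for_now}. Second, I would combine the two directions: if the generalized right ample identity holds, then \thmref{main_thm_for_now} gives that $\varphi$ is an algebra homomorphism, and \lemref{containment_in_poset} gives that it is a module isomorphism; a bijective algebra homomorphism is an algebra isomorphism, so $\varphi$ is an isomorphism of $\Bbbk$-algebras. Conversely, if $\varphi$ is an isomorphism of $\Bbbk$-algebras then in particular it is an algebra homomorphism, so \thmref{main_thm_for_now} yields the generalized right ample identity.

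There is really no obstacle here of a mathematical nature — the content was all done in \thmref{main_thm_for_now} and \lemref{containment_in_poset}. The only thing one must be slightly careful about is the bookkeeping: making sure the inverse $\psi$ constructed in \lemref{containment_in_poset} is automatically an algebra map once $\varphi$ is (which is the standard fact that the set-theoretic inverse of a bijective ring homomorphism is a ring homomorphism), and making sure one does not accidentally claim the module isomorphism of \lemref{containment_in_poset} uses the ample identity — it does not, it only uses the poset containment. I would therefore present the proof in two or three sentences, citing \thmref{main_thm_for_now} for multiplicativity and \lemref{containment_in_poset} for bijectivity, and noting the trivial inheritance of principal finiteness.

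\begin{proof}
First observe that since $\{x\in S\mid x\preceq a\}$ is finite for every $a\in S$ and $\{x\in S\mid x\trianglelefteq_{l}a\}\subseteq\{x\in S\mid x\preceq a\}$, the relation $\trianglelefteq_{l}$ is itself principally finite, so the standing hypotheses of this section are in force and \thmref{main_thm_for_now} applies. By \lemref{containment_in_poset} the $\Bbbk$-module homomorphism $\varphi$ is bijective (with inverse $\psi$ as constructed there). If the generalized right ample condition holds in $S$, then \thmref{main_thm_for_now} ensures that $\varphi$ is a homomorphism of $\Bbbk$-algebras; being bijective, it is an isomorphism of $\Bbbk$-algebras (its set-theoretic inverse $\psi$ is then automatically multiplicative). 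Conversely, if $\varphi$ is an isomorphism of $\Bbbk$-algebras then in particular it is a homomorphism of $\Bbbk$-algebras, so \thmref{main_thm_for_now} implies that the generalized right ample condition holds in $S$.
\end{proof}
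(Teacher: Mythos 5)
Your proposal is correct and is exactly the paper's argument: the paper states \thmref{isomorphism_theorem} as an immediate corollary of \thmref{main_thm_for_now} (multiplicativity $\iff$ generalized right ample) and \lemref{containment_in_poset} (module bijectivity), with no further proof given. Your extra bookkeeping — that containment in a principally finite partial order yields principal finiteness of $\trianglelefteq_{l}$, and that the inverse of a bijective algebra homomorphism is automatically an algebra homomorphism — is accurate and merely makes explicit what the paper leaves implicit.
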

We will see later that $\trianglelefteq_{l}$ is indeed contained
in a partial order for a few natural cases.
\begin{rem}
The advantage of \thmref{isomorphism_theorem} is that it is often
much easier to study the category algebra $\Bbbk\mathcal{C}$ and
its representations than the original semigroup algebra $\Bbbk S$.
For many natural semigroups the associated category is EI \cite{Margolis2021,Stein2016},
locally trivial \cite{Stein2020} or a poset \cite{Margolis2018A}.
The representation theory of such categories is understood to a certain
extent (see for instance \cite{Li2011,Webb2007}).
\end{rem}

\section{\label{sec:Ehresmann_semigroups}$E$-Ehresmann semigroups}

Recall that a reduced $E$-Fountain semigroup is called $E$-Ehresmann
if the set of idempotents $E$ is a subsemilattice (or a subband,
by \lemref{SubbandImpliesEhresmann}).

We want to show that in the case of $E$-Ehresmann semigroups the
generalized and standard right ample identities are equivalent.
\begin{prop}
\label{prop:In_Subband_RR_Eqiovalent_to_GRR} Let $S$ be an $E$-Ehresmann
semigroup. Then the right ample identity holds if and only if the
generalized right ample identity holds.
\end{prop}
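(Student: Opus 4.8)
The plan is to prove the two implications separately, since one of them is already in hand. By \propref{Standard_right_ample_implies_generalised} the right ample identity always implies the generalized right ample identity (and that implication does not even use that $E$ is a subsemilattice). So the only thing left to prove is the converse: if $S$ is $E$-Ehresmann and the generalized right ample identity holds, then $ea = a(ea)^{\ast}$ for all $a \in S$ and $e \in E$.

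For the converse I would fix $a \in S$ and $e \in E$ and substitute $f = (ea)^{\ast}$ into the generalized right ample identity. Then $eaf = ea(ea)^{\ast} = ea$, so $(eaf)^{\ast} = (ea)^{\ast}$; writing $b = a(ea)^{\ast}$, the identity collapses to
\[
\bigl(e\,b^{+}\bigr)^{\ast} = b^{+}.
\]
Now I would invoke the subsemilattice hypothesis: $e$ and $b^{+}$ both lie in $E$, so $eb^{+}\in E$, and every idempotent of $E$ is its own $\ast$, giving $(eb^{+})^{\ast} = eb^{+}$. Comparing with the display yields $eb^{+} = b^{+}$.

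The rest is a one-line computation. Using $b^{+}b = b$ (one of the defining identities of \propref{Reduced_E_Fountain}) together with associativity, $eb = e(b^{+}b) = (eb^{+})b = b^{+}b = b$; that is, $ea(ea)^{\ast} = a(ea)^{\ast}$. Since $(ea)^{\ast}$ is a right identity of $ea$, the left-hand side equals $ea$, so $ea = a(ea)^{\ast}$, which is precisely the right ample identity.

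I do not anticipate a serious obstacle here: the only mildly nonobvious step is spotting that $f = (ea)^{\ast}$ is the substitution that makes the generalized identity degenerate into a statement about $eb^{+}$, after which the subsemilattice hypothesis does all the work. (Without that hypothesis the argument genuinely breaks down — in \exaref{Rectangular_band_is_weak_right_restriction} the generalized identity holds although $E$ is not even a band and the standard identity fails.)
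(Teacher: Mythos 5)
Your proof is correct and follows essentially the same route as the paper's: the key step in both is that the subsemilattice hypothesis forces $e\left(a\left(eaf\right)^{\ast}\right)^{+}\in E$, so the generalized identity collapses to $e\left(a\left(eaf\right)^{\ast}\right)^{+}=\left(a\left(eaf\right)^{\ast}\right)^{+}$, from which $ea(eaf)^{\ast}=a(eaf)^{\ast}$ and then the right ample identity follow. The only cosmetic differences are that you specialize $f=(ea)^{\ast}$ at the outset (the paper works with general $f$ and substitutes $f=a^{\ast}$ at the end, which yields the same instance since $eaa^{\ast}=ea=ea(ea)^{\ast}$) and that you pass from $eb^{+}=b^{+}$ to $eb=b$ by the direct computation $eb=eb^{+}b$ rather than by citing $b\,\Rt\,b^{+}$.
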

\begin{proof}
In fact, this follows indirectly from \thmref{isomorphism_theorem}
and \cite[Theorem 4.4]{Wang2017} since both properties are equivalent
to $\varphi$ being an isomorphism, but we give a simple direct proof.
In view of \propref{Standard_right_ample_implies_generalised}, it
is enough to show that the generalized right ample identity implies
the standard one. Let $a\in S$ and $e\in E$. If $E$ is a subband
then for every $f\in E$ we have $e\left(a\left(eaf\right)^{\ast}\right)^{+}\in E$
and therefore 
\[
\left(e\left(a\left(eaf\right)^{\ast}\right)^{+}\right)^{\ast}=e\left(a\left(eaf\right)^{\ast}\right)^{+}.
\]
The assumption that $S$ satisfies the generalized right ample identity
implies
\[
e\left(a\left(eaf\right)^{\ast}\right)^{+}=\left(a\left(eaf\right)^{\ast}\right)^{+}
\]
so $e$ is left identity of $\left(a\left(eaf\right)^{\ast}\right)^{+}$.
Since $a(eaf)^{\ast}\Rt(a(eaf)^{\ast})^{+}$this is equivalent to
\[
ea\left(eaf\right)^{\ast}=a\left(eaf\right)^{\ast}
\]
Now we can substitute $f=a^{\ast}$ and obtain
\[
ea=ea(ea)^{\ast}=ea(eaa^{\ast})^{\ast}=a\left(eaa^{\ast}\right)^{\ast}=a\left(ea\right)^{\ast}
\]
as required.
\end{proof}
We have already seen (\corref{EqualityOfLeqs}) that if $S$ is an
$E$-Ehresmann semigroup then $\leq_{l}=\trianglelefteq_{l}$ so $\trianglelefteq_{l}$
is a partial order. In this case \thmref{isomorphism_theorem} is
already known. The fact that $\varphi$ is an isomorphism of $\Bbbk$-algebras
was proved by the author in \cite{Stein2017,Stein2018erratum}. The
necessity of the right ample property was proved by Wang in \cite[Lemma 4.3]{Wang2017}.
In fact, Wang proved a version of \thmref{isomorphism_theorem} for
the class of right $P$-restriction locally Ehresmann $P$-Ehresmann
semigroups (for definitions of these notions, see \cite{Jones2012,Wang2017}).
We leave open the problem of finding a unified generalization for
\thmref{main_thm_for_now} and Wang's result \cite[Theorem 4.4]{Wang2017}.

\section{\label{sec:Catalan_monoid}Catalan monoid }

A function $f:[n]\to[n]$ (where $[n]=\{1,\ldots,n\}$) is called
\emph{order-preserving }if $i\leq j\implies f(i)\leq f(j)$ for every
$i,j\in[n]$ and \emph{order-increasing }if $i\leq f(i)$ for every
$i\in[n]$. Denote by $\CT_{n}$ the monoid of all order-preserving
and order-increasing functions $f:[n]\to[n]$, called the Catalan
monoid. The name comes from the fact that its size is the $n$-th
Catalan number \cite[Theorem 14.2.8]{Ganyushkin2009b}. The Catalan
monoid is $\mathcal{J}$-trivial \cite[Proposition 17.17]{Steinberg2016}
and it is well known that every $\mathcal{J}$-trivial monoid is $E$-Fountain
for $E=E(S)$ \cite[Corollary 3.2]{Margolis2018B}. It is also known
that in a $\mathcal{J}$-trivial monoid $ef=e\iff fe=e\iff e$$\leq_{\Jc}f$
for $e,f\in E(S)$ (\cite[Lemma 3.6]{Denton2010}) so every $\mathcal{J}$-trivial
monoid is in fact a reduced $E$-Fountain monoid. We define a partial
order $\preceq_{n}$ on subsets of $[n]$ in the following way. For
two subsets $X=\{x_{1}<\ldots<x_{k}\}$ and $Y=\{y_{1}<\ldots<y_{r}\}$
we define $X\preceq_{n}Y$ if $k=r$ (i.e., $|X|=|Y|$) and $x_{i}\leq y_{i}$
for every $1\leq i\leq k$. It is well known (see \cite{Margolis2018A})
that elements of $\CT_{n+1}$ are in one-to-one correspondence with
pairs $(X,Y)$ where $X,Y\subseteq[n]$ and $X\preceq_{n}Y$. In other
words, there is a one-to-one correspondence between elements of $\CT_{n+1}$
and elements of $\preceq_{n}$ viewed as a set of ordered pairs. Explicitly,
given such a pair $(X,Y)$ we define 
\[
f_{X,Y}(i)=\begin{cases}
y_{1} & \text{if }1\leq i\leq x_{1}\\
y_{j} & \text{if }x_{j-1}<i\leq x_{j}\\
n+1 & \text{if }x_{k}<i.
\end{cases}
\]
It is easy to verify that $f_{X,Y}\in\CT_{n+1}$. Note that $\im(f_{X,Y})\backslash\{n+1\}=Y$
and that 
\[
\ker(f_{X_{1},Y_{1}})=\ker(f_{X_{2},Y_{2}})\iff X_{1}=X_{2}
\]
(where $\im(f)$ is the image of $f$ and $\ker(f)$ is the equivalence
relation on $[n]$ defined by $\mbox{\ensuremath{(x_{1},x_{2})\in\ker f}}$
if and only if $f(x_{1})=f(x_{2})$). Note also that $x_{i}$ is the
maximal element whose image is $y_{i}$. Recall that $\mbox{\ensuremath{f:[n]\to[n]}}$
is an idempotent if and only if $f(i)=i$ for every $i\in\im(f)$.
Therefore $f_{X,Y}$ is an idempotent if and only if $X=Y$. In particular,
there is a one-to-one correspondence between idempotents in $\CT_{n+1}$
and subsets of $[n]$. For every $Z\subseteq[n]$ we denote by $e_{Z}$
the idempotent corresponding to $Z$. Explicitly (see \cite[Proposition 17.18]{Steinberg2016}):
\[
e_{Z}(i)=\min\{z\in Z\cup\{n+1\}\mid i\leq z\}
\]

The following proposition is part of \cite[Proposition 17.20]{Steinberg2016}
(with notation $f^{-}$ instead of our $f^{\ast}$).
\begin{prop}
\label{prop:Star_Plus_In_Catalan}Let $f_{X,Y}\in\CT_{n+1}$ then
$(f_{X,Y})^{\ast}=e_{X}$ and $(f_{X,Y})^{+}=e_{Y}$.
\end{prop}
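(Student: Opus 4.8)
The plan is to prove both equalities $(f_{X,Y})^{\ast}=e_X$ and $(f_{X,Y})^{+}=e_Y$ by using the characterizations of the unary operations available in this context. Since $\CT_{n+1}$ is a $\mathcal{J}$-trivial monoid with $E=E(S)$, and since in such monoids $ef=e\iff fe=e\iff e\leq_{\Jc}f$, the element $a^{\ast}$ is the \emph{minimum} idempotent $e$ satisfying $ae=a$, and dually $a^{+}$ is the minimum idempotent $e$ with $ea=a$. In fact, since $f_{X,Y}\Lt (f_{X,Y})^{\ast}$ and the $\Lt$-class is determined by $a_E$, it suffices to identify which idempotents $e_Z$ are right identities of $f_{X,Y}$ and then take the least one; by the remarks preceding the proposition, two elements $f_{X_1,Y_1}, f_{X_2,Y_2}$ have the same kernel iff $X_1=X_2$, and $f_{X,Y}$ is $\Lt$-related (even $\Lc$-related, hence same kernel) to its star idempotent, which is $e_X$.

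Concretely, first I would show $f_{X,Y}\,e_Y=f_{X,Y}$, i.e. $e_Y$ is a right identity. Using the explicit formulas, $e_Y(i)=\min\{z\in Y\cup\{n+1\}\mid i\leq z\}$, and $\im(f_{X,Y})\subseteq Y\cup\{n+1\}$; since $e_Y$ fixes every element of $Y\cup\{n+1\}$ (as $Y$ is exactly the idempotent's fixed set together with $n+1$), we get $e_Y(f_{X,Y}(i))=f_{X,Y}(i)$ for all $i$, hence $f_{X,Y}e_Y=f_{X,Y}$. Wait — I must be careful about composition order; in $\CT_{n+1}$, $(fg)(i)=f(g(i))$ or $g(f(i))$ depending on convention. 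The paper composes "from right to left," so $fg$ means first apply $g$. Then $f_{X,Y}e_Y$ means first apply $e_Y$, then $f_{X,Y}$: I need $f_{X,Y}(e_Y(i))=f_{X,Y}(i)$ for all $i$. This would instead be the statement that $e_Y$ is a \emph{left} identity. So let me instead verify directly: $e_X$ is a right identity of $f_{X,Y}$ means $f_{X,Y}(e_X(i))=f_{X,Y}(i)$; since $x_j$ is the maximal element with image $y_j$, and $e_X(i)$ rounds $i$ up to the next element of $X$, one checks $f_{X,Y}$ is constant on each block $(x_{j-1},x_j]$, so rounding $i$ up to $x_j$ within its block doesn't change the image — giving $f_{X,Y}e_X=f_{X,Y}$. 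Dually $e_Y f_{X,Y}=f_{X,Y}$ because $\im(f_{X,Y})\subseteq Y\cup\{n+1\}$ and $e_Y$ fixes that set pointwise, so $e_Y$ is a left identity.

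Second, I would argue minimality. For $(f_{X,Y})^{\ast}=e_X$: suppose $e_Z$ is any right identity, $f_{X,Y}e_Z=f_{X,Y}$. Then $f_{X,Y}$ and $e_Z$ have... actually the cleanest route is via $\Lc$: since $\Lc\subseteq\Lt$ and $\mathcal{J}$-trivial monoids are $\mathcal{R}$- and $\mathcal{L}$-trivial, we have $f_{X,Y}\,\Lc\,(f_{X,Y})^{\ast}$ forces $(f_{X,Y})^{\ast}$ to be the unique idempotent with $S^1(f_{X,Y})^{\ast}=S^1 f_{X,Y}$, equivalently with the same kernel as $f_{X,Y}$. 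Hmm, but $\Lt$ is coarser. Let me take the direct minimality approach: if $f_{X,Y}e_Z=f_{X,Y}$ then applying both sides and using that $e_Z$ rounds up, one shows $X\subseteq Z$ (each $x_j\in X$ must be fixed by $e_Z$, else its image would collapse into the next block's value), hence $e_Z e_X = e_X$ in the semilattice, i.e. $e_X\leq e_Z$. So $e_X$ is the minimum right identity, giving $(f_{X,Y})^{\ast}=e_X$. The dual argument with left identities and $\im(f_{X,Y})\backslash\{n+1\}=Y$ gives $Y\subseteq Z$ whenever $e_Z f_{X,Y}=f_{X,Y}$, hence $(f_{X,Y})^{+}=e_Y$.

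The main obstacle will be the bookkeeping in the minimality step: correctly translating "$e_Z$ is a right (resp. left) identity of $f_{X,Y}$" into the set containment $X\subseteq Z$ (resp. $Y\subseteq Z$) using the piecewise formulas for $f_{X,Y}$ and $e_Z$, and keeping the composition convention straight throughout. Since this proposition is quoted as "part of \cite[Proposition 17.20]{Steinberg2016}," I would likely keep the proof brief, citing that reference for the detailed verification, and only sketch the identification of right/left identities as above.
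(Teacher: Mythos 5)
Your argument is correct, but note that the paper gives no proof of this proposition at all: it simply quotes it as part of \cite[Proposition 17.20]{Steinberg2016}, so your self-contained verification is genuinely more than what the paper supplies. Both halves of your plan check out. For the identity step, with the convention $fg(i)=f(g(i))$ used throughout the Catalan section, $f_{X,Y}e_X=f_{X,Y}$ holds because $f_{X,Y}$ is constant on each block $(x_{j-1},x_j]$ and $e_X$ rounds $i$ up to $x_j$ within its block (and to $n+1$ above $x_k$), while $e_Yf_{X,Y}=f_{X,Y}$ holds because $e_Y$ fixes $Y\cup\{n+1\}\supseteq\im(f_{X,Y})$ pointwise. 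For minimality, your containments are right: if $f_{X,Y}e_Z=f_{X,Y}$ and some $x_j\notin Z$, then $e_Z(x_j)>x_j$ and, since $x_j$ is the \emph{maximal} element with image $y_j$, $f_{X,Y}(e_Z(x_j))>y_j$, a contradiction, so $X\subseteq Z$; dually $e_Zf_{X,Y}=f_{X,Y}$ forces $e_Z$ to fix $Y$ pointwise, so $Y\subseteq Z$. The last link you should state explicitly is that $X\subseteq Z$ implies $e_Ze_X=e_X$ (because $e_X(i)\in X\cup\{n+1\}\subseteq Z\cup\{n+1\}$ is already fixed by $e_Z$), whence $e_X\leq e_Z$ by the reduced property $ef=e\iff fe=e$; then $(f_{X,Y})^{\ast}$, being the minimum of the set of right identities in $E$, equals $e_X$, and dually for $e_Y$. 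The detour through $\Lc$ and kernels that you hesitate over is unnecessary; the direct minimality route is the clean one, and your worry about the composition convention resolves correctly in your final formulation.
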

As an immediate corollary we have:
\begin{cor}
Let $f_{X_{1},Y_{1}},f_{X_{2},Y_{2}}\in\CT_{n+1}$ then
\[
f_{X_{1},Y_{1}}\Lt f_{X_{2},Y_{2}}\iff X_{1}=X_{2}\iff\ker(f_{X_{1},Y_{1}})=\ker(f_{X_{2},Y_{2}})
\]
\[
f_{X_{1},Y_{1}}\Rt f_{X_{2},Y_{2}}\iff Y_{1}=Y_{2}\iff\im(f_{X_{1},Y_{1}})=\im(f_{X_{2},Y_{2}}).
\]
\end{cor}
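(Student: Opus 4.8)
The plan is to reduce both displayed equivalence chains to \propref{Star_Plus_In_Catalan} together with one general fact about reduced $E$-Fountain semigroups and the bookkeeping observations about $\CT_{n+1}$ recorded in the paragraphs preceding that proposition. The general fact I would isolate is: in any reduced $E$-Fountain semigroup, $a\Lt b$ iff $a^{\ast}=b^{\ast}$, and dually $a\Rt b$ iff $a^{+}=b^{+}$. To prove it, first note that $a_{E}=\{e\in E\mid a^{\ast}\leq e\}$: if $a^{\ast}\leq e$ then $a^{\ast}e=a^{\ast}$, so $ae=(aa^{\ast})e=a(a^{\ast}e)=aa^{\ast}=a$ and hence $e\in a_{E}$; conversely if $e\in a_{E}$ then $a^{\ast}\leq e$ because $a^{\ast}$ is the minimum of $a_{E}$. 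Thus $a_{E}$ is completely determined by the single idempotent $a^{\ast}$ and vice versa, so $a_{E}=b_{E}\iff a^{\ast}=b^{\ast}$; the $\Rt$ version is the left-right dual, using $\prescript{}{E}{a}=\{e\in E\mid a^{+}\leq e\}$.

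With this in hand the first chain is immediate: $f_{X_{1},Y_{1}}\Lt f_{X_{2},Y_{2}}$ iff $(f_{X_{1},Y_{1}})^{\ast}=(f_{X_{2},Y_{2}})^{\ast}$, which by \propref{Star_Plus_In_Catalan} reads $e_{X_{1}}=e_{X_{2}}$, and since $Z\mapsto e_{Z}$ is a bijection from subsets of $[n]$ onto the idempotents of $\CT_{n+1}$ this is equivalent to $X_{1}=X_{2}$. The remaining equivalence $X_{1}=X_{2}\iff\ker(f_{X_{1},Y_{1}})=\ker(f_{X_{2},Y_{2}})$ is precisely the kernel identity displayed just before \propref{Star_Plus_In_Catalan}. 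Symmetrically, $f_{X_{1},Y_{1}}\Rt f_{X_{2},Y_{2}}$ iff $(f_{X_{1},Y_{1}})^{+}=(f_{X_{2},Y_{2}})^{+}$ iff $e_{Y_{1}}=e_{Y_{2}}$ iff $Y_{1}=Y_{2}$, and $Y_{1}=Y_{2}\iff\im(f_{X_{1},Y_{1}})=\im(f_{X_{2},Y_{2}})$ follows at once from $\im(f_{X,Y})\setminus\{n+1\}=Y$, also noted in that paragraph.

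There is no real obstacle here: the statement is genuinely a corollary of \propref{Star_Plus_In_Catalan}. The only step that is not a verbatim citation is the elementary observation $a_{E}=\{e\in E\mid a^{\ast}\leq e\}$ (and its dual), which supplies the translation between the relations $\Lt,\Rt$ and the unary operations $\ast,+$; once that is recorded, everything else is a one-line substitution using the explicit descriptions of $f_{X,Y}$, $e_{Z}$, $\ker$ and $\im$ established in the surrounding text. If one prefers, this observation can be stated once and for all back in \secref{Theory_of_idempotents}, in which case the present corollary becomes a literal one-line deduction.
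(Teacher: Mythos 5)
Your argument is correct and is exactly the reasoning the paper leaves implicit: the paper states this as an immediate corollary of \propref{Star_Plus_In_Catalan} with no written proof, and the intended route is precisely your translation $a\Lt b\iff a^{\ast}=b^{\ast}$ (valid in any reduced $E$-Fountain semigroup since $a^{\ast}=\min a_{E}$ determines $a_{E}$), combined with the bijection $Z\mapsto e_{Z}$ and the kernel/image observations recorded before the proposition. Nothing to add beyond noting that $n+1\in\im(f_{X,Y})$ always, so $\im(f_{X,Y})=Y\cup\{n+1\}$ and the image equivalence is indeed two-directional.
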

\begin{lem}
The Catalan monoid satisfies the congruence condition.
\end{lem}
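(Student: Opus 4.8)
The plan is to verify the congruence condition directly for $\CT_{n+1}$ using the characterization recalled in the excerpt: $S$ satisfies the congruence condition if and only if $(ab)^{\ast}=(a^{\ast}b)^{\ast}$ and $(ab)^{+}=(ab^{+})^{+}$ for all $a,b\in S$. Equivalently, I can check that $\Lt$ is a right congruence and $\Rt$ is a left congruence. By the preceding corollary, $f_{X_1,Y_1}\Lt f_{X_2,Y_2}$ is the same as $\ker(f_{X_1,Y_1})=\ker(f_{X_2,Y_2})$, and $f_{X_1,Y_1}\Rt f_{X_2,Y_2}$ is the same as $\im(f_{X_1,Y_1})=\im(f_{X_2,Y_2})$. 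So the statement reduces to the elementary facts that for ordinary functions on a finite set, $\ker$ is preserved under right multiplication (precomposition) and $\im$ is preserved under left multiplication (postcomposition).

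Concretely, first I would take $f,g,h\in\CT_{n+1}$ with $f\Lt g$, i.e.\ $\ker f=\ker g$, and show $fh\Lt gh$. Using the "right-to-left" composition convention matching the category (so $fh$ means "apply $h$ then $f$"), one has for $x_1,x_2\in[n+1]$ that $(fh)(x_1)=(fh)(x_2)\iff f(h(x_1))=f(h(x_2))\iff (h(x_1),h(x_2))\in\ker f=\ker g\iff g(h(x_1))=g(h(x_2))\iff (gh)(x_1)=(gh)(x_2)$, so $\ker(fh)=\ker(gh)$ and hence $fh\Lt gh$. This is exactly the same one-line argument as in the $L(H)$ lemma above, just with $\ker$ of a set-function. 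Dually, if $f\Rt g$, i.e.\ $\im f=\im g$, then for any $h$ one has $\im(hf)=h(\im f)=h(\im g)=\im(hg)$, so $hf\Rt hg$. The only extra point to note is that all the products $fh$, $gh$, $hf$, $hg$ land back in $\CT_{n+1}$, which is immediate since $\CT_{n+1}$ is a monoid (composites of order-preserving order-increasing functions are again order-preserving and order-increasing).

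There is essentially no obstacle here; the lemma is a routine consequence of the corollary identifying $\Lt,\Rt$ with the kernel and image relations, plus the trivial stability of kernels under precomposition and images under postcomposition. The only thing to be a little careful about is the composition convention: since the paper composes morphisms in $\mathcal C$ from right to left, one should make sure "$\Lt$ is a right congruence" is being checked against multiplication on the correct side, but because $\Lt\supseteq\Lc$ and $\Lc$ is always a right congruence in the usual sense, and because the kernel argument is symmetric in how one names the factors, this causes no real trouble. I would write the proof in two short displayed chains of iff's, mirroring the $L(H)$ proof, and then state that the congruence condition follows.

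\begin{proof}
By the corollary above, for $f,g\in\CT_{n+1}$ we have $f\Lt g\iff\ker(f)=\ker(g)$ and $f\Rt g\iff\im(f)=\im(g)$. Let $f,g,h\in\CT_{n+1}$. First suppose $f\Lt g$, so $\ker(f)=\ker(g)$. For $x_1,x_2\in[n+1]$,
\[
(x_1,x_2)\in\ker(fh)\iff \bigl(h(x_1),h(x_2)\bigr)\in\ker(f)\iff\bigl(h(x_1),h(x_2)\bigr)\in\ker(g)\iff(x_1,x_2)\in\ker(gh),
\]
so $\ker(fh)=\ker(gh)$ and thus $fh\Lt gh$; hence $\Lt$ is a right congruence. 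Now suppose $f\Rt g$, so $\im(f)=\im(g)$. Then
\[
\im(hf)=h\bigl(\im(f)\bigr)=h\bigl(\im(g)\bigr)=\im(hg),
\]
so $hf\Rt hg$; hence $\Rt$ is a left congruence. Therefore $\CT_{n+1}$ satisfies the congruence condition.
\end{proof}
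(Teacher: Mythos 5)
Your proof is correct and follows essentially the same route as the paper: the paper verifies the equivalent identities $(fg)^{\ast}=(f^{\ast}g)^{\ast}$ and $(fg)^{+}=(fg^{+})^{+}$ by reducing them to $\ker(fg)=\ker(f^{\ast}g)$ and $\im(fg)=\im(fg^{+})$, which is the same kernel/image computation you perform (yours just checks the congruence property for arbitrary $\Lt$- and $\Rt$-related pairs rather than for the pair $f,f^{\ast}$ or $g,g^{+}$). No gaps.
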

\begin{proof}
Take two functions $f,g\in\CT_{n+1}$. For $\Lt$ to be a right congruence
we need to show that $(fg)^{\ast}=(f^{\ast}g)^{\ast}$ or equivalently,
$\ker(fg)=\ker(f^{\ast}g)$. Indeed, 
\begin{align*}
(a_{1},a_{2}) & \in\ker(fg)\iff fg(a_{1})=fg(a_{2})\iff(g(a_{1}),g(a_{2}))\in\ker f\\
 & \iff(g(a_{1}),g(a_{2}))\in\ker f^{\ast}\iff f^{\ast}g(a_{1})=f^{\ast}g(a_{2})\\
 & \iff(a_{1},a_{2})\in\ker(f^{\ast}g).
\end{align*}

To show that $\Rt$ is a left congruence we need to show that $(fg)^{+}=(fg^{+})^{+}$
or equivalently, $\im(fg)=\im(fg^{+})$. Indeed,
\begin{align*}
b\in\im(fg) & \iff\exists a\in\im(g),\quad f(a)=b\iff\exists a\in\im(g^{+}),\quad f(a)=b\\
 & \iff b\in\im(fg^{+}).
\end{align*}
\end{proof}
Note that the objects of the associated category are subsets of $[n]$
and if $\mbox{\ensuremath{X,Y\subseteq[n]}}$ then there is a unique
morphism from $X$ to $Y$ if and only if $X\preceq_{n}Y$. Therefore
the associated category is just the poset $\preceq_{n}$ viewed as
a category. 

Our next goal is to prove that the generalized right ample identity
holds in the Catalan monoid. The proof given here is essentially taken
from \cite{Margolis2018A}, but we reformulate it in the language
of $E$-Fountain monoids. We first need the following definition.
\begin{defn}
Let $f\in C_{n+1}$ and let $Z\subseteq[n]$, we say that $Z$ is
a partial cross section ($\PCS$) of $f$ if $n+1\notin f(Z)$ and
$\left.f\right|_{Z}$ is injective (where $\left.f\right|_{Z}$ is
the restriction of $f$ to the set $Z$).
\end{defn}
\begin{prop}
\label{prop:PCS_proposition}Let $f\in C_{n+1}$ and let $Z\subseteq[n]$
then $(fe_{Z})^{\ast}=e_{Z}$ if and only if $Z$ is a $\PCS$ of
$f$.
\end{prop}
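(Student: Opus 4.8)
The plan is to unwind both sides in terms of kernels, using \propref{Star_Plus_In_Catalan} together with the explicit description of $e_Z$. Since $(fe_Z)^{\ast} = e_{(fe_Z)^{\ast}\text{-set}}$ is itself an idempotent $e_W$ for some $W\subseteq[n]$, and since for idempotents $e_W \leq e_Z$ in the natural partial order iff $W \supseteq Z$ (this is the $\mathcal{J}$-trivial fact $e_W e_Z = e_W \iff W \supseteq Z$, which one reads off from $e_W(i)=\min\{w\in W\cup\{n+1\}\mid i\le w\}$), the equality $(fe_Z)^{\ast}=e_Z$ is equivalent to the single statement $W = Z$, i.e. $\ker(fe_Z) = \ker(e_Z)$. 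So I would reduce the claim to: $\ker(fe_Z)=\ker(e_Z)$ if and only if $Z$ is a $\PCS$ of $f$. Throughout, note that $(fe_Z)^{\ast}\le e_Z$ always holds (because $fe_Z e_Z = fe_Z$), so only the reverse inequality $e_Z \le (fe_Z)^{\ast}$, equivalently $\ker(fe_Z)\subseteq\ker(e_Z)$, is in question.

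For the ``if'' direction, suppose $Z=\{z_1<\dots<z_k\}$ is a $\PCS$ of $f$. I want $\ker(fe_Z)\subseteq\ker(e_Z)$. Take $(a_1,a_2)\in\ker(fe_Z)$, so $f(e_Z(a_1))=f(e_Z(a_2))$. Now $e_Z(a_j)\in Z\cup\{n+1\}$. If both values lie in $Z$, injectivity of $f|_Z$ gives $e_Z(a_1)=e_Z(a_2)$, hence $(a_1,a_2)\in\ker(e_Z)$. The remaining case is when, say, $e_Z(a_1)=n+1$: then $f(e_Z(a_2))=f(n+1)$; since $n+1\in\im f$ always (as $f$ is order-increasing, $f(n+1)=n+1$... actually one must check $n+1\in\im f$ — this holds because $f$ order-increasing forces $f(n+1)\ge n+1$, so $f(n+1)=n+1$), we get $f(e_Z(a_2))=n+1$, and if $e_Z(a_2)\in Z$ this contradicts $n+1\notin f(Z)$; hence $e_Z(a_2)=n+1=e_Z(a_1)$ and again $(a_1,a_2)\in\ker(e_Z)$. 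This shows $\ker(fe_Z)=\ker(e_Z)$, i.e. $(fe_Z)^{\ast}=e_Z$.

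For the ``only if'' direction, I argue contrapositively. Suppose $Z$ is not a $\PCS$ of $f$. Either $n+1\in f(Z)$, say $f(z)=n+1$ for some $z\in Z$; then $e_Z(z)=z$ and $f(e_Z(z))=n+1=f(n+1)=f(e_Z(n+1))$, so $(z,n+1)\in\ker(fe_Z)$ while $e_Z(z)=z\ne n+1=e_Z(n+1)$ (here I use $z\le n$), so $\ker(fe_Z)\ne\ker(e_Z)$. Or $f|_Z$ is not injective, say $f(z)=f(z')$ with $z\ne z'$ in $Z$; then $e_Z(z)=z$, $e_Z(z')=z'$, so $(z,z')\in\ker(fe_Z)$ but $(z,z')\notin\ker(e_Z)$. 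In either case $(fe_Z)^{\ast}=e_{\ker(fe_Z)\text{-set}}\ne e_Z$.

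The main obstacle — really the only delicate point — is the bookkeeping around the value $n+1$: one must remember that $\im(f_{X,Y})$ always contains $n+1$, that $f(n+1)=n+1$ for every $f\in\CT_{n+1}$, and that $e_Z(i)=n+1$ exactly when $i>\max Z$ (with $\max\emptyset$ interpreted as $0$). Once these conventions are pinned down, each direction is a short case analysis on whether the relevant $e_Z$-values land in $Z$ or equal $n+1$. I would also state explicitly at the start the lemma $(fe_Z)^{\ast}\le e_Z$ and the translation between $e_W\le e_Z$ and $W\supseteq Z$, since the whole argument rests on converting the identity $(fe_Z)^{\ast}=e_Z$ into the kernel equality $\ker(fe_Z)=\ker(e_Z)$.
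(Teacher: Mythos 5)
Your proof is correct and takes essentially the same route as the paper's: both reduce $(fe_Z)^{\ast}=e_Z$ to the kernel equality $\ker(fe_Z)=\ker(e_Z)$ and then run the same case analysis on whether the relevant $e_Z$-values lie in $Z$ or equal $n+1$, your ``only if'' direction being just the contrapositive of the paper's direct argument. (One inessential slip: $e_We_Z=e_W$ holds iff $W\subseteq Z$, not $W\supseteq Z$; your argument only actually uses $e_W=e_Z\iff W=Z$, which is fine.)
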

\begin{proof}
First assume $(fe_{Z})^{\ast}=e_{Z}$. Let $z\in Z$. Since $e_{Z}$
and $fe_{Z}$ have the same kernel and $e_{Z}(z)=z\neq n+1=e_{Z}(n+1)$
then $f(z)=fe_{Z}(z)\neq fe_{Z}(n+1)=n+1$. Now if $f(z)=f(z^{\prime})$
for some $z,z^{\prime}\in Z$ then $fe_{Z}(z)=fe_{Z}(z^{\prime})$
hence $(z,z^{\prime})\in\ker(fe_{Z})$. Therefore, $(z,z^{\prime})\in\ker(e_{Z})$
so $z=e_{Z}(z)=e_{Z}(z^{\prime})=z^{\prime}$ and $\left.f\right|_{Z}$
is indeed injective. In the other direction assume $Z$ is a $\PCS$
of $f$. We need to prove that $\ker(fe_{Z})=\ker(e_{Z})$. It is
clear that $\ker(e_{Z})\subseteq\ker(fe_{Z})$. For the other containment
assume $(x,x^{\prime})\in\ker(fe_{Z})$ so $fe_{Z}(x)=fe_{Z}(x^{\prime})$.
First note that 
\[
fe_{Z}(x)=fe_{Z}(x^{\prime})=n+1\iff e_{Z}(x)=e_{Z}(x^{\prime})=n+1
\]
because $\im(e_{Z})=Z\cup\{n+1\}$ and $n+1\notin f(Z)$. The other
option is that $e_{Z}(x),e_{Z}(x^{\prime})\in Z$. In this case the
fact that $\left.f\right|_{Z}$ is injective implies that 
\[
fe_{Z}(x)=fe_{Z}(x^{\prime})\implies e_{Z}(x)=e_{Z}(x^{\prime})
\]
so $\ker(fe_{Z})\subseteq\ker(e_{Z})$ as required.
\end{proof}
\begin{cor}
\label{cor:Leq_L_and_PCS}If $h\trianglelefteq_{l}f$ for $h,f\in\CT_{n+1}$
and $h^{\ast}=e_{Z}$ where $Z\subseteq[n]$ then $Z$ is a $\PCS$
of $f$. 
\end{cor}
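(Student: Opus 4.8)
The plan is to deduce this directly from \propref{PCS_proposition}. The hypothesis $h\trianglelefteq_{l}f$ means, by the characterization $a\trianglelefteq_{l}b\iff a=ba^{\ast}$ proved right after the definition of $\trianglelefteq_{l}$, that $h=fh^{\ast}$; combined with $h^{\ast}=e_{Z}$ this collapses to the single equation $h=fe_{Z}$. Since $h$ and $fe_{Z}$ are then literally the same element of $\CT_{n+1}$, applying the unary operation $\ast$ gives $(fe_{Z})^{\ast}=h^{\ast}=e_{Z}$.

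Now \propref{PCS_proposition} states that, for $f\in\CT_{n+1}$ and $Z\subseteq[n]$, the equality $(fe_{Z})^{\ast}=e_{Z}$ holds if and only if $Z$ is a $\PCS$ of $f$. We have just verified the left-hand side, so the proposition immediately yields that $Z$ is a $\PCS$ of $f$, which is exactly the claim.

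There is no genuine obstacle here: all of the substance lives in \propref{PCS_proposition}, and the corollary is merely the observation that $h\trianglelefteq_{l}f$ together with $h^{\ast}=e_{Z}$ forces $h=fe_{Z}$, so that the hypothesis of the easy direction of that proposition is met. One could alternatively unwind the definition of $\PCS$ by hand---checking $n+1\notin f(Z)$ and injectivity of $f|_{Z}$ from $\ker(fe_{Z})=\ker(e_{Z})$---but this would only reproduce the argument already given for \propref{PCS_proposition}, so routing through the proposition is the cleanest path.
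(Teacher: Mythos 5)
Your argument is correct and matches the paper's proof essentially verbatim: both reduce $h\trianglelefteq_{l}f$ with $h^{\ast}=e_{Z}$ to $h=fe_{Z}$, deduce $(fe_{Z})^{\ast}=h^{\ast}=e_{Z}$, and invoke Proposition \ref{prop:PCS_proposition}. Nothing to add.
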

\begin{proof}
We have $h=fe_{Z}$ so $e_{Z}=h^{\ast}=(fe_{Z})^{\ast}$ and the result
follows by \propref{PCS_proposition}.
\end{proof}
\begin{thm}
The Catalan monoid $\CT_{n+1}$ satisfies the generalized right ample
condition.
\end{thm}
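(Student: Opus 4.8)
The plan is to reduce the generalized right ample identity to the combinatorial language of partial cross sections and then verify it using \propref{Star_Plus_In_Catalan}, \propref{PCS_proposition}, and \corref{Leq_L_and_PCS}. By \lemref{WRR_equivalent_definitions} it is enough to prove that whenever $c\trianglelefteq_{l}ea$ for $a,c\in\CT_{n+1}$ and $e\in E$, one has $(e(ac^{\ast})^{+})^{\ast}=(ac^{\ast})^{+}$. By \propref{Star_Plus_In_Catalan} we may write $c^{\ast}=e_{Z}$ for some $Z\subseteq[n]$, and then \corref{Leq_L_and_PCS} tells us that $Z$ is a $\PCS$ of $ea$, that is, $n+1\notin ea(Z)$ and $ea|_{Z}$ is injective. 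Throughout I will use the elementary facts that $a(n+1)=n+1=e(n+1)$ (both functions are order-increasing) and that $\im(e_{Z})=Z\cup\{n+1\}$.

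The heart of the argument is the observation that a partial cross section of the composite $ea$ yields a partial cross section of $a$ whose image is then a partial cross section of $e$. First, injectivity of $ea|_{Z}$ immediately forces injectivity of $a|_{Z}$, and if $a(z)=n+1$ for some $z\in Z$ then $ea(z)=e(n+1)=n+1$, contradicting $n+1\notin ea(Z)$; hence $n+1\notin a(Z)$. Consequently $\im(ae_{Z})=a(Z)\cup\{n+1\}$ with $n+1\notin a(Z)$, so \propref{Star_Plus_In_Catalan} gives $(ac^{\ast})^{+}=(ae_{Z})^{+}=e_{W}$, where $W:=a(Z)\subseteq[n]$. Next I would check that $W$ is a $\PCS$ of $e$: if $e(w)=n+1$ for some $w=a(z)\in W$ then $ea(z)=n+1$, again impossible; and if $e(a(z))=e(a(z'))$ with $z,z'\in Z$ then $ea(z)=ea(z')$, so $z=z'$ by injectivity of $ea|_{Z}$, whence $a(z)=a(z')$. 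Therefore \propref{PCS_proposition} applied to $e$ and $W$ yields $(e\cdot e_{W})^{\ast}=e_{W}$, which is exactly $(e(ac^{\ast})^{+})^{\ast}=(ac^{\ast})^{+}$, as required.

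I do not expect a genuine obstacle here. The only real insight is to pass, via \lemref{WRR_equivalent_definitions}, to the $\trianglelefteq_{l}$-formulation of the identity and then to notice the factorization of the partial cross section condition through the intermediate image $a(Z)$; after that, everything is a short verification built from injectivity on $Z$ and the behaviour of the functions at the point $n+1$. The one thing to be careful about is the bookkeeping with $n+1$, which plays the role of the ``undefined'' value and must be excluded from the relevant images at every stage.
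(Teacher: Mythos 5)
Your proof is correct and follows essentially the same route as the paper: reduce via \lemref{WRR_equivalent_definitions} to a $\trianglelefteq_{l}$-implication, translate $^{\ast}$ and $^{+}$ into partial cross sections via \propref{Star_Plus_In_Catalan} and \propref{PCS_proposition}, and show that a $\PCS$ of the composite pushes forward along the inner map to a $\PCS$ of the outer one. The only cosmetic difference is that you verify condition (2) of the lemma (outer factor an idempotent $e$) while the paper verifies the general condition (3) with an arbitrary outer factor $f$; the combinatorial argument is identical.
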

\begin{proof}
We use the equivalent condition \enuref{WRR_def_3} given in \lemref{WRR_equivalent_definitions}.
Assume $h\trianglelefteq_{l}fg$ for some $\mbox{\ensuremath{f,g,h\in\CT_{n+1}}}$
and denote $h^{\ast}=e_{Z}$. Since $\text{\mbox{\ensuremath{h\trianglelefteq_{l}fg}}}$
we know that $Z$ is a $\PCS$ of $fg$ by \corref{Leq_L_and_PCS}.
Denote $\mbox{\ensuremath{W=\im(ge_{Z})\backslash\{n+1\}}}$ and note
that for every $w\in W$ there exists $z\in Z$ such that $\mbox{\ensuremath{g(z)=ge_{Z}(z)=w}}$.
We need to prove that 
\[
(f(ge_{Z})^{+})^{\ast}=(ge_{Z})^{+}.
\]
According to \propref{PCS_proposition} and \propref{Star_Plus_In_Catalan}
we need to show that $W$ is a $\PCS$ of $f$. Take $w\in W$ and
let $z\in Z$ such that $g(z)=w$, then $f(w)=fg(z)\neq n+1$ since
$Z$ is a $\PCS$ of $fg$. Now take $w,w^{\prime}\in W$ such that
$f(w)=f(w^{\prime})$. Take $z,z^{\prime}\in Z$ such that $g(z)=w$
and $g(z^{\prime})=w^{\prime}$. Then $fg(z)=fg(z^{\prime})$ so $z=z^{\prime}$
since $\left.fg\right|_{Z}$ is injective.
\end{proof}
Since its idempotents do not form a subband (for $n\geq3$), the Catalan
monoid $\CT_{n}$ is another example of a semigroup which satisfies
the generalized right ample condition but not the standard one. A
concrete simple counter-example is given in the following example:
\begin{example}
\label{exa:Right_restriction_Counter_example}Consider the idempotents
$e_{1},e_{2}\in\CT_{3}$ defined by
\[
e_{1}(i)=\begin{cases}
2 & i=1,2\\
3 & i=3
\end{cases},\quad e_{2}(i)=\begin{cases}
1 & i=1\\
3 & i=2,3
\end{cases}
\]
so
\[
e_{1}e_{2}(i)=\begin{cases}
2 & i=1\\
3 & i=2,3.
\end{cases}
\]
It is easy to see that $(e_{1}e_{2})^{\ast}=e_{2}$ as $\ker e_{2}=\ker e_{1}e_{2}$.
Therefore,
\[
e_{2}(e_{1}e_{2})^{\ast}=e_{2}e_{2}=e_{2}\neq e_{1}e_{2}
\]
so the right ample condition does not hold.
\end{example}
\begin{rem}
\label{rem:Leq_l_not_transitive}\exaref{Right_restriction_Counter_example}
also shows that the relation $\trianglelefteq_{l}$ in $\CT_{n}$
is not transitive and hence not even a preorder. It is clear that
$e_{1}\trianglelefteq_{l}\id$ (where $\id$ is the identity function)
and $e_{1}e_{2}\trianglelefteq_{l}e_{1}$. However, $e_{1}e_{2}$
is not an idempotent hence $e_{1}e_{2}\ntrianglelefteq_{l}\id$. 
\end{rem}
Fortunately, it is easy to see that $\trianglelefteq_{l}$ is contained
in a partial order.
\begin{lem}
Let $S$ be a reduced $E$-Fountain semigroup. If $S$ is $\mathcal{R}$-trivial
then $\trianglelefteq_{l}$ is contained in a partial order. 
\end{lem}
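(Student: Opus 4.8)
The plan is to exhibit an explicit partial order, namely Green's preorder $\leq_{\Rc}$, and show it works. Recall from \secref{Preliminaries} that $\leq_{\Rc}$ is defined by $a\leq_{\Rc}b\iff aS^{1}\subseteq bS^{1}$; this relation is always reflexive and transitive, so it is a preorder on any semigroup. The point of the $\mathcal{R}$-triviality hypothesis is precisely that it forces antisymmetry: if $a\leq_{\Rc}b$ and $b\leq_{\Rc}a$ then $a\Rc b$, and $\Rc$-triviality gives $a=b$. Hence under the hypothesis $\leq_{\Rc}$ is a genuine partial order.

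It then remains to verify the containment $\trianglelefteq_{l}\,\subseteq\,\leq_{\Rc}$. This is immediate from the definition of $\trianglelefteq_{l}$: if $a\trianglelefteq_{l}b$ then $a=be$ for some $e\in E$, so $a\in bS\subseteq bS^{1}$, whence $aS^{1}\subseteq bS^{1}S^{1}=bS^{1}$, i.e.\ $a\leq_{\Rc}b$. Combining the two observations, $\leq_{\Rc}$ is a partial order containing $\trianglelefteq_{l}$, which is exactly the assertion.

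There is essentially no obstacle here; the only point worth flagging is that the lemma as stated does not (and need not) claim that the partial order is principally finite, so no finiteness argument is required — although in the motivating application to $\CT_{n+1}$, which is a finite $\Jc$-trivial (hence $\Rc$-trivial) monoid, $\leq_{\Rc}$ is trivially principally finite, so this lemma together with \thmref{isomorphism_theorem} will yield the desired isomorphism for the Catalan monoid.
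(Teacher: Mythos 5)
Your proposal is correct and follows essentially the same route as the paper: both exhibit $\leq_{\Rc}$ as the containing relation, note that $a\trianglelefteq_{l}b$ gives $a=be$ (the paper writes $a=ba^{\ast}$) and hence $a\leq_{\Rc}b$, and use $\mathcal{R}$-triviality to upgrade the preorder $\leq_{\Rc}$ to a partial order. Your remarks on principal finiteness are a sensible aside but not needed for the lemma itself.
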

\begin{proof}
If $a\trianglelefteq_{l}b$ then $a=ba^{\ast}$ so $a\leq_{\Rc}b$
hence $\trianglelefteq_{l}\subseteq\leq_{\Rc}$. The relation $\leq_{\Rc}$
is a partial order if and only if $S$ is $\mathcal{R}$-trivial.
\end{proof}
In particular, $\leq_{\Rc}$ is a partial order in the Catalan monoid
which is a $\Jc$-trivial monoid. In conclusion, the results in this
section and \thmref{isomorphism_theorem} imply the following corollary:
\begin{thm}
\label{thm:Catalan_isomorphism}Let $\Bbbk$ be a unital commutative
ring. There is an isomorphism of algebras $\mbox{\ensuremath{\Bbbk\CT_{n+1}\simeq\Bbbk[\preceq_{n}]}}$.
In particular, $\Bbbk\CT_{n+1}$ is an incidence algebra.
\end{thm}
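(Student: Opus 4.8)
The plan is to assemble this from the machinery already developed, since all the substantive work has been done in the preceding sections. First I would record that $\CT_{n+1}$ is a finite, $\Jc$-trivial monoid, hence (by the cited facts of Margolis--Steinberg and Denton et al.) a reduced $E$-Fountain monoid with $E=E(\CT_{n+1})$, and that it satisfies the congruence condition (the lemma proved above via the kernel/image computations). Finiteness is important because it makes every relation on $\CT_{n+1}$ principally finite, so the standing hypotheses of \secref{Algebras_Section} are met.

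Next I would verify the two hypotheses of \thmref{isomorphism_theorem}. For the first, $\trianglelefteq_{l}$ must be contained in a principally finite partial order: since $\CT_{n+1}$ is $\Jc$-trivial it is in particular $\Rc$-trivial, so $\leq_{\Rc}$ is a partial order, and by the lemma above $\trianglelefteq_{l}\subseteq\leq_{\Rc}$; principal finiteness is automatic from $|\CT_{n+1}|<\infty$. For the second, the generalized right ample condition, I would simply invoke the theorem proved just above stating that $\CT_{n+1}$ satisfies the generalized right ample identity (the argument via partial cross sections, \propref{PCS_proposition} and \corref{Leq_L_and_PCS}). With both hypotheses in hand, \thmref{isomorphism_theorem} yields that $\varphi\colon\Bbbk\CT_{n+1}\to\Bbbk\mathcal{C}$ is an isomorphism of $\Bbbk$-algebras, where $\mathcal{C}={\bf C}(\CT_{n+1})$ is the associated category.

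Then I would identify $\mathcal{C}$ explicitly. As observed above, the objects of $\mathcal{C}$ are the idempotents of $\CT_{n+1}$, which correspond bijectively to subsets of $[n]$ (via $Z\mapsto e_Z$), and a morphism $X\to Y$ exists precisely when $X\preceq_{n}Y$; hence $\mathcal{C}$ is exactly the poset $\preceq_{n}$ viewed as a category. Finally, since $\CT_{n+1}$ is finite the poset $\preceq_{n}$ is a finite set, so by the remark in \secref{Preliminaries} the category algebra $\Bbbk\mathcal{C}$ coincides with the incidence algebra $\Bbbk[\preceq_{n}]$ (there is no distinction between finite and arbitrary linear combinations). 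Composing the two isomorphisms gives $\Bbbk\CT_{n+1}\simeq\Bbbk[\preceq_{n}]$, proving the theorem.

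I do not expect a genuine obstacle here: the only points requiring care are bookkeeping ones, namely checking that every hypothesis of \thmref{isomorphism_theorem} is literally satisfied (in particular the containment $\trianglelefteq_{l}\subseteq\leq_{\Rc}$ and principal finiteness) and that the passage from category algebra to incidence algebra is legitimate, which it is only because of finiteness. The conceptual content — the generalized right ample identity and the isomorphism criterion — is entirely supplied by the earlier results.
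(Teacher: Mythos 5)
Your proposal is correct and follows exactly the paper's own route: the paper likewise derives \thmref{Catalan_isomorphism} by combining the verification of the generalized right ample condition, the containment $\trianglelefteq_{l}\subseteq\leq_{\Rc}$ (valid since the $\Jc$-trivial Catalan monoid is $\Rc$-trivial), the identification of the associated category with the poset $\preceq_{n}$, and \thmref{isomorphism_theorem}. The only detail you make more explicit than the paper is the finiteness argument identifying the category algebra with the incidence algebra, which is a welcome clarification rather than a deviation.
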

\begin{rem}
This isomorphism is precisely the one given in \cite[Theorem 3.1]{Margolis2018A}.

As a final observation, we consider also the generalized left ample
condition. 
\end{rem}
\begin{prop}
The Catalan monoid $\CT_{n+1}$ satisfies the generalized left ample
condition.
\end{prop}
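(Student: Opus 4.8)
The plan is to mirror the argument used for the generalized \emph{right} ample condition, now working with the dual notion $\trianglelefteq_{r}$ and with kernels and images interchanged. By the dual of \lemref{WRR_equivalent_definitions} it suffices to verify the following implication: for every $f,g,h\in\CT_{n+1}$, if $h\trianglelefteq_{r}gf$ then $(((gf h^{+})^{\ast})f)^{+}$ equals $(gf h^{+})^{\ast}$ (the precise bookkeeping of which composite appears is the dual of condition \enuref{WRR_def_3}). The key structural fact I would use is the ``image side'' analogue of \propref{PCS_proposition}. Recall from \propref{Star_Plus_In_Catalan} that $(f_{X,Y})^{+}=e_{Y}$ and $\im(f_{X,Y})\setminus\{n+1\}=Y$; so on the left, for $Z\subseteq[n]$, one checks that $(e_{Z}f)^{+}=e_{Z}$ if and only if $Z$ is a ``partial co-cross-section'' of $f$, meaning every element of $Z$ lies in $\im(f)\setminus\{n+1\}$ and distinct elements of $Z$ have distinct $f$-preimages among the maximal representatives — equivalently, writing $f=f_{X,Y}$, the set $Z$ is obtained from $Y$ by a condition dual to being a $\PCS$. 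I would state and prove this as a short lemma, dual to \propref{PCS_proposition}, using that $\im(e_{Z}f)=\{e_{Z}(b)\mid b\in\im(f)\}$ and that $e_{Z}$ restricted to $Z\cup\{n+1\}$ is the identity.

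With that dual lemma in hand, the argument proceeds exactly as in the right-ample case. Assume $h\trianglelefteq_{r}gf$, so $h=e_{W}gf$ where $W=\prescript{}{E}{h}$'s minimum, i.e.\ $h^{+}=e_{W}$; by the dual of \corref{Leq_L_and_PCS}, $W$ is a partial co-cross-section of $gf$. Set $V=\prescript{}{}{}(e_{W}g)$-image, concretely $V=\im(e_{W}g)\setminus\{n+1\}$, which is the set appearing in the object of the associated category that we must match. One then shows $V$ is a partial co-cross-section of $f$: every $v\in V$ arises as $e_{W}g(b)=g(b)$ for suitable $b$ with $g(b)\in W$ (using $n+1\notin g(b)$), so $v\in\im(f)$-preimages are controlled because $V\subseteq\im(g)$ and applying $gf$ one uses injectivity of the relevant restriction coming from $W$ being a co-cross-section of $gf$. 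The verification of injectivity of $f$ on the preimage side restricted to $V$ is the mirror image of the last two sentences of the proof that $W$ is a $\PCS$ of $f$ in the right-ample theorem; tracing preimages through $g$ then $f$ and invoking that $gf$ has the co-cross-section property on $W$ gives the claim.

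The main obstacle I anticipate is purely notational rather than mathematical: the Catalan monoid's left/right asymmetry is genuine (images behave differently from kernels because functions are not invertible), so the ``dual'' of a $\PCS$ is not literally a $\PCS$ of any transpose, and one must carefully reformulate it in terms of the data $(X,Y)$ and the explicit formula for $e_{Z}$. In particular I would need to be careful that $(e_{Z}f)^{+}=e_{Z}$ really is the correct translation of ``the composite lands in the right object,'' and that the minimum idempotent $h^{+}$ corresponds under $f\mapsto(X,Y)$ to the set one expects. Once the correct dual of \propref{PCS_proposition} is pinned down and its elementary proof written out, the rest is a direct transcription of the right-ample argument with $\ker$ replaced by $\im$, $\trianglelefteq_{l}$ by $\trianglelefteq_{r}$, and left multiplication by right multiplication, so no new idea is required.
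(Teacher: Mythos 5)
Your overall strategy --- dualize condition \enuref{WRR_def_3} of \lemref{WRR_equivalent_definitions} and replace \propref{PCS_proposition} by an ``image-side'' analogue --- is exactly the paper's route, but the dualization is not the mechanical transcription you describe, and two of your concrete choices are wrong. First, your proposed dual of \propref{PCS_proposition} mischaracterizes the condition $(e_{Z}f)^{+}=e_{Z}$. Since $\im(e_{Z}f)\subseteq\im(e_{Z})=Z\cup\{n+1\}$ automatically, $(e_{Z}f)^{+}=e_{Z}$ is a pure \emph{surjectivity} statement: for every $z\in Z$ there exists $x$ with $e_{Z}f(x)=z$ (the paper calls such an $f$ a multi cross section, $\MCS$, of $Z$). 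It is \emph{not} equivalent to ``every element of $Z$ lies in $\im(f)\setminus\{n+1\}$'', and no injectivity condition on preimages enters. For a counterexample to your version take $n=2$, $Z=\{2\}$ and $f(1)=1$, $f(2)=f(3)=3$: then $e_{Z}f$ has image $\{2,3\}=\im(e_{Z})$, so $(e_{Z}f)^{+}=e_{Z}$, yet $2\notin\im(f)$. The dual of a $\PCS$ (an injectivity condition phrased via kernels) is a surjectivity condition phrased via images, so trying to keep a ``distinct preimages'' clause in the definition is precisely where the transcription breaks.

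Second, you misidentify both the implication to be proved and the intermediate idempotent. The dual of condition \enuref{WRR_def_3} is $h\trianglelefteq_{r}fg\implies((h^{+}f)^{\ast}g)^{+}=(h^{+}f)^{\ast}$ --- the idempotent $h^{+}$ multiplies only the \emph{left} factor, on the left --- not $(((gfh^{+})^{\ast})f)^{+}=(gfh^{+})^{\ast}$. Consequently the intermediate object is $e_{W}=(h^{+}f)^{\ast}$, a $\ast$-type (kernel-determined) idempotent: by \propref{Star_Plus_In_Catalan}, if $h^{+}f=f_{X,Y}$ then $W=X$, which is read off from $\ker(h^{+}f)$, whereas your $V=\im(e_{W}g)\setminus\{n+1\}$ is the index set of a $+$-type idempotent, copied verbatim from the right-ample proof where the roles of $\ast$ and $+$ are reversed. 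With the wrong set you would not be verifying the required identity. Once these are fixed the paper's argument is short: $fg$ is an $\MCS$ of $Z$ (where $h^{+}=e_{Z}$), one sets $e_{W}=(e_{Z}f)^{\ast}$, and for $w\in W$ one uses $\ker(e_{W})=\ker(e_{Z}f)$ to translate ``find $x$ with $e_{W}g(x)=w$'' into ``find $x$ with $e_{Z}fg(x)=e_{Z}f(w)$'', which the $\MCS$ property of $fg$ supplies after checking that $e_{Z}f(w)\neq n+1$.
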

\begin{proof}
Note that $(e_{Z}g)^{+}=e_{Z}$ means that $e_{Z}g$ and $e_{Z}$
have the same image. In other words, for every $z\in Z$ there exists
$x$ such that $e_{Z}g(x)=z$. In this case we say that $g$ is a
multi cross section ($\MCS$) of $Z$. We use the dual of condition
\enuref{WRR_def_3} given in \lemref{WRR_equivalent_definitions}:
\[
h\trianglelefteq_{r}fg\implies((h^{+}f)^{\ast}g)^{+}=(h^{+}f)^{\ast}
\]
If $h\trianglelefteq_{r}fg$ and $h^{+}=e_{Z}$ then $e_{Z}=h^{+}=(h^{+}fg)^{+}=(e_{Z}fg)^{+}$
so $fg$ is an $\MCS$ of $Z$. Denote $e_{W}=(e_{Z}f)^{\ast}$. We
need to prove that $g$ is an $\MCS$ of $W$. Let $w\in W$, we need
to find $x$ such that $e_{W}g(x)=w=e_{W}(w)$. Note that $e_{W}$
and $e_{Z}f$ have the same kernel so this is equivalent to finding
$x$ such that $e_{Z}fg(x)=e_{Z}f(w)$. Set $z=e_{Z}f(w)$ so $z\in\im(e_{Z})$.
If $z=n+1$ then $e_{Z}f(w)=e_{Z}f(n+1)$ so $w=e_{W}(w)=e_{W}(n+1)=n+1$,
a contradiction. Therefore, $z\in\im(e_{Z})\backslash\{n+1\}=Z$.
Since $fg$ is an $\MCS$ of $Z$ there exists an $x$ such that $e_{Z}fg(x)=z$
as required.
\end{proof}
\bibliographystyle{plain}
\bibliography{library}

\begin{thebibliography}{10}

\bibitem{berberian1988baer}
Sterling~K Berberian.
\newblock Baer rings and baer *-rings.
\newblock {\em The University of Texas at Austin, Registered US Copyright
  Office March}, 1988.

\bibitem{Denton2010}
Tom Denton, Florent Hivert, Anne Schilling, and Nicolas~M. Thi{\'e}ry.
\newblock On the representation theory of finite j-trivial monoids.
\newblock {\em S\'em. Lothar. Combin.}, 64:Art. B64d, 44, 2010/11.

\bibitem{East2021}
James East and Robert~D. Gray.
\newblock Ehresmann theory and partition monoids.
\newblock {\em J. Algebra}, 579:318--352, 2021.

\bibitem{Foulis1960}
David~J. Foulis.
\newblock Baer {$\sp{\ast} $}-semigroups.
\newblock {\em Proc. Amer. Math. Soc.}, 11:648--654, 1960.

\bibitem{Ganyushkin2009b}
Olexandr Ganyushkin and Volodymyr Mazorchuk.
\newblock {\em Classical finite transformation semigroups}, volume~9 of {\em
  Algebra and Applications}.
\newblock Springer-Verlag London Ltd., London, 2009.
\newblock An introduction.

\bibitem{Gould2010}
Victoria Gould.
\newblock Notes on restriction semigroups and related structures; formerly
  (weakly) left {E}-ample semigroups, 2010.

\bibitem{Gould2010b}
Victoria Gould.
\newblock Restriction and {E}hresmann semigroups.
\newblock In {\em Proceedings of the {I}nternational {C}onference on {A}lgebra
  2010}, pages 265--288. World Sci. Publ., Hackensack, NJ, 2012.

\bibitem{Guo2012}
Xiaojiang Guo and Lin Chen.
\newblock Semigroup algebras of finite ample semigroups.
\newblock {\em Proc. Roy. Soc. Edinburgh Sect. A}, 142(2):371--389, 2012.

\bibitem{Hivert2009}
Florent Hivert and Nicolas~M. Thi\'{e}ry.
\newblock The {H}ecke group algebra of a {C}oxeter group and its representation
  theory.
\newblock {\em J. Algebra}, 321(8):2230--2258, 2009.

\bibitem{Howie1995}
John~M. Howie.
\newblock {\em Fundamentals of semigroup theory}, volume~12 of {\em London
  Mathematical Society Monographs. New Series}.
\newblock The Clarendon Press Oxford University Press, New York, 1995.
\newblock Oxford Science Publications.

\bibitem{Jones2012}
Peter~R. Jones.
\newblock A common framework for restriction semigroups and regular
  {$*$}-semigroups.
\newblock {\em J. Pure Appl. Algebra}, 216(3):618--632, 2012.

\bibitem{Lawson1991}
M.~V. Lawson.
\newblock Semigroups and ordered categories. {I}. {T}he reduced case.
\newblock {\em J. Algebra}, 141(2):422--462, 1991.

\bibitem{Lawson1990}
Mark~V. Lawson.
\newblock Rees matrix semigroups.
\newblock {\em Proc. Edinburgh Math. Soc. (2)}, 33(1):23--37, 1990.

\bibitem{Li2011}
Liping Li.
\newblock A characterization of finite {EI} categories with hereditary category
  algebras.
\newblock {\em J. Algebra}, 345:213--241, 2011.

\bibitem{Margolis2021}
Stuart Margolis and Itamar Stein.
\newblock Ehresmann semigroups whose categories are {EI} and their
  representation theory.
\newblock {\em J. Algebra}, 585:176--206, 2021.

\bibitem{Margolis2018A}
Stuart Margolis and Benjamin Steinberg.
\newblock The algebra of the catalan monoid as an incidence algebra: A simple
  proof.
\newblock {\em arXiv preprint arXiv:1806.06531}, 2018.

\bibitem{Margolis2018B}
Stuart Margolis and Benjamin Steinberg.
\newblock Projective indecomposable modules and quivers for monoid algebras.
\newblock {\em Comm. Algebra}, 46(12):5116--5135, 2018.

\bibitem{Solomon1967}
Louis Solomon.
\newblock The {B}urnside algebra of a finite group.
\newblock {\em J. Combinatorial Theory}, 2:603--615, 1967.

\bibitem{Stanley1997}
Richard~P. Stanley.
\newblock {\em Enumerative combinatorics. {V}ol. 1}, volume~49 of {\em
  Cambridge Studies in Advanced Mathematics}.
\newblock Cambridge University Press, Cambridge, 1997.
\newblock With a foreword by Gian-Carlo Rota, Corrected reprint of the 1986
  original.

\bibitem{Stein2016}
Itamar Stein.
\newblock The representation theory of the monoid of all partial functions on a
  set and related monoids as {EI}-category algebras.
\newblock {\em J. Algebra}, 450:549--569, 2016.

\bibitem{Stein2017}
Itamar Stein.
\newblock Algebras of {E}hresmann semigroups and categories.
\newblock {\em Semigroup Forum}, 95(3):509--526, 2017.

\bibitem{Stein2018erratum}
Itamar Stein.
\newblock Erratum to: {A}lgebras of {E}hresmann semigroups and categories.
\newblock {\em Semigroup Forum}, 96(3):603--607, 2018.

\bibitem{Stein2019}
Itamar Stein.
\newblock The global dimension of the algebra of the monoid of all partial
  functions on an {$n$}-set as the algebra of the {EI}-category of epimorphisms
  between subsets.
\newblock {\em J. Pure Appl. Algebra}, 223(8):3515--3536, 2019.

\bibitem{Stein2020}
Itamar Stein.
\newblock Representation theory of order-related monoids of partial functions
  as locally trivial category algebras.
\newblock {\em Algebr. Represent. Theory}, 23(4):1543--1567, 2020.

\bibitem{Steinberg2006}
Benjamin Steinberg.
\newblock M\"obius functions and semigroup representation theory.
\newblock {\em J. Combin. Theory Ser. A}, 113(5):866--881, 2006.

\bibitem{Steinberg2016}
Benjamin Steinberg.
\newblock {\em Representation theory of finite monoids}.
\newblock Universitext. Springer, Cham, 2016.

\bibitem{Stokes2015}
Tim Stokes.
\newblock Domain and range operations in semigroups and rings.
\newblock {\em Comm. Algebra}, 43(9):3979--4007, 2015.

\bibitem{Wang2017}
Shoufeng Wang.
\newblock On algebras of {$P$}-{E}hresmann semigroups and their associate
  partial semigroups.
\newblock {\em Semigroup Forum}, 95(3):569--588, 2017.

\bibitem{Webb2007}
Peter Webb.
\newblock An introduction to the representations and cohomology of categories.
\newblock In {\em Group representation theory}, pages 149--173. EPFL Press,
  Lausanne, 2007.

\end{thebibliography}

\end{document}